\newcommand{\inv}[1]{\frac{1}{#1}}
\newcommand{\Prob}{\mathbb{P}}
\newcommand{\R}{\mathbb{R}}
\newcommand{\Rd}{\mathbb{R}^d}
\newcommand{\f}[2]{\frac{#1}{#2}}
\newcommand{\ps}[2]{\left\langle{#1},{#2}\right\rangle}
\renewcommand{\epsilon}{\varepsilon}
\newcounter{casenum}
\newcounter{subcasenum}
\theoremstyle{plain}
\newtheorem{thm}{Theorem}[section]
\newtheorem{prop}[thm]{Proposition}
\newtheorem{coro}[thm]{Corollary}
\newtheorem{lem}[thm]{Lemma}
\newtheorem{assump}[thm]{Assumptions}
\theoremstyle{definition}
\newtheorem{rmk}[thm]{Remark}
\title[Trend to equilibrium for granular equation]{Trend to equilibrium for granular media equations under non-convex potentials and application to log gases}
\author{Scander Mustapha}
\thanks{mustapha@princeton.edu}
\begin{document}
\maketitle
\begin{abstract}

We derive new HWI inequalities for the granular media equation, which external potential $V$ and interaction potential $W$ are only strictly convex on complementary parts of the space. Particularly, potentials are not assumed convex. After solving technicalities related to the singularity of a logarithmic $W$, we apply our result to obtain stability rates of log gases under non-strictly convex or quartic external potentials. We prove that the distribution of a log gas converges towards an equilibrium with respect to the Wasserstein distance at a square root rate. Finally, we establish exponential stability of log gases under the double-well potential $V(x) = \f{x^4}{4} + c\f{x^2}{2}$, $c < 0$ and the non-confining potential $V(x) = g\f{x^4}{4} + \f{x^2}{2}$, $g<0$ for $|c|$ and $|g|$ small enough. 
\end{abstract}

\section{Introduction}
\subsection{Granular media equation and HWI inequalities}
The present paper studies the extension of stability results proved for the granular media equation
\begin{equation}
    \label{eq:granular_media}
      \partial_t\mu = \nabla\cdot\left[
  \mu\nabla\left(\inv{2}V + W*\mu\right)
  \right]
\end{equation}
to non-convex potentials.
The unknown $\mu$ is a time-dependent probability measure on $\Rd$, $V:\Rd\rightarrow\R$ is an external potential and $W:\Rd\rightarrow\R$ is an interaction potential. The non-local and non-linear partial differential equation (\ref{eq:granular_media}) is the formal gradient flow of the entropy
\begin{equation}
\label{eq:entropy}
\Sigma(\mu) = \inv{2}\int_{\Rd}V(x)\mu(dx) + \inv{2}\int_{\Rd\times\Rd}W(x-y)\mu(dx)\mu(dy),
\end{equation}
which dissipation is defined as
\begin{equation}
\label{eq:information}
D(\mu) \equiv\int_{\Rd} \left|\nabla\left(\inv{2}V + W*\mu\right)\right|^2\mu(dx).
\end{equation}

Under the two sets of assumptions $D^2V\ge 2\lambda$ and $D^2W\ge 0$ or $D^2V\ge 0$ and $D^2W\ge\lambda$, for some $\lambda > 0$, Carrillo et al. established in~\cite{carrillo2003kinetic} the celebrated HWI inequality 
\begin{equation}
  \label{hwi}
\Sigma(\rho_0) - \Sigma(\rho_1) \le \sqrt{D(\rho_0)}W_2(\rho_0, \rho_1) - \f{\lambda}{2} W_2(\rho_0, \rho_1)^2,
\end{equation}
 $W_2$ being the Wasserstein distance and $\rho_0, \rho_1$ having finite entropy and belong to $\mathcal{M}_2$, the set of probability measures with finite second order moment. This inequality implies successively a transportation inequality, a log-Sobolev inequality and exponential stability with respect to the Wasserstein distance towards a minimizer $\mu_\infty$ of the entropy
\begin{equation}
  W_2(\mu_t, \mu_\infty)\le \sqrt{\f{2\left(\Sigma(\mu_0) - \Sigma(\mu_\infty)\right)}{\lambda}}e^{-\lambda t},\ \forall t\ge 0.
\end{equation}

The primary aim of this paper is to extend this method to non-strictly convex potentials. More precisely, we allow $V$ to be non-convex near the origin, according to the following assumptions.
\begin{assump}
  \label{assumpA}
  There exist $\alpha, \beta, \gamma, r > 0$ such that the $C^2$ potentials $V, W$ satisfy
  \begin{itemize}
  \item[(A1)] $D^2V(x)\ge - \beta$ for $x\in\Rd$,
  \item[(A2)] $D^2V(x) \ge \alpha$ for $|x|\ge r$,
  \item[(A3)] $V$ is symmetric: $V(x) = V(-x)$ for $x\in\Rd$,
  \item[(A4)] $W$ is convex and $D^2W(x) \ge \gamma$ for $|x|\le 2r$,
  \item[(A5)] $W$ is symmetric: $W(x) = W(-x)$ for $x\in\Rd$.
  \end{itemize}
\end{assump}
Under those assumptions, we prove the following

\begin{thm}
  \label{thm:hwi1}
  Assume that Assumptions \ref{assumpA} are satisfied.
  Let $\rho_0, \rho_1\in\mathcal{M}_2$ with finite entropy. Define
  \begin{equation}
    P_r = \max\left(\int_{|x|>r}\rho_0(dx), \int_{|x|> r}\rho_1(dx)\right).
  \end{equation}
  If $\rho_0$ and $\rho_1$ have the same center of mass, then
  \begin{equation}
    \label{eq:hwi}
    \Sigma(\rho_0) - \Sigma(\rho_1) \le \sqrt{D(\rho_0)} W_2(\rho_0, \rho_1)- \f{\lambda}{2} W_2(\rho_0,\rho_1)^2,
  \end{equation}
  where the constant $\lambda$ is given by
  \begin{equation}
      \label{lambda-fcm}
    \lambda = \f{\min\left(\alpha,2\gamma-\beta\right)}{2} - 2\gamma P_r.
  \end{equation}
  If $\rho_0$ and $\rho_1$ are symmetric, then (\ref{eq:hwi}) holds with the better constant $\lambda$:
    \begin{equation}
      \label{lambda-sym}
      \lambda = \f{\min\left(\alpha,2\gamma\big(1 - 2P_r)-\beta\right)}{2}.
    \end{equation}
\end{thm}
\begin{rmk}\
  \begin{enumerate}[(i)]
  \item The method of establishing HWI inequalities is not the only one to derive stability rates. We can cite the strategy of Bakry-Emery \cite{bakry}, which consists of computing the dissipation of the entropy dissipation or the method of characteristics or even ad-hoc computations similar to what is done in \cite{Ledoux_2009}. In any case, the computations turn out to be very similar.
  \item HWI inequality (\ref{eq:hwi}) is slightly different from usual HWI inequalities, because $\lambda$ depends on $\rho_0$ and $\rho_1$ through their tail probabilities. Therefore, application of our modified HWI inequality requires beforehand uniform bound of those tail probabilities, by the help of bound upon the moments for example. 
    \item Our proof relies heavily on exploiting the convexity of $W$, which requires the assumptions of a fixed center of mass or a symmetric initial data (see Theorem 2.2 \cite{carrillo2003kinetic} for example or \cite{bolley2012convergence} and \cite{bolley2013uniform}).
    \item Consider the internal energy $\mathcal{U}$
      \begin{equation}
        \mathcal{U}(\mu) = \int_{\Rd}U(\mu(x))\mu(dx),
      \end{equation}
      where $U:\R_+^*\to\R$ satisfies the dilation condition that $\lambda\in\R^*_+\to\lambda^dU(\lambda^{-d})$ is convex and non-increasing. The conclusions of Theorem \ref{thm:hwi1} hold with the same constants for the entropy
      \begin{equation}
        \mu\mapsto \mathcal{U}(\mu) + \Sigma(\mu).
      \end{equation}
      Typically for $U(\rho) = \rho\log{\rho}$, our approach can be used to prove exponential stability of Mckean-Vlasov diffusions under non-convex external potentials (see \cite{tugaut2013convergence}).
  \end{enumerate}
\end{rmk}

In the same fashion of Theorem 2.3 \cite{carrillo2003kinetic}, we investigate the case of $V$ convex (non necessarily strictly convex) and $W$ degeneratly convex at infinity.
\begin{assump}
  \label{assumpB}
  $V$ and $W$ belong to $C^2(\Rd)$ and $C^2(\Rd-\{0\})$ respectively and there exist positive constants $c$ and $\eta$ such that
  \begin{itemize}
  \item[(B1)] $D^2V(x)\ge 0$ for $x\in\R^d$,
  \item[(B2)] $D^2W(x) \ge \f{c}{|x|^\eta}$ for $x\in\R^d-\{0\}$.
  \end{itemize}
\end{assump}
Under those assumptions and additional technical Assumptions~\ref{assumpD}, postponed to Section \ref{section-4}, we prove the following 
\begin{thm}\label{thm:hwi2}
  Assume that Assumptions~\ref{assumpB} and~\ref{assumpD} are satisfied. Let $\rho_0, \rho_1\in\mathcal{M}_2$ with finite fourth moments and finite entropy. Define
  \begin{equation}
    m = \max{\left(\int_{\Rd} |x|^4\rho_0(dx),\int_{\Rd} |x|^4\rho_1(dx)\right)}.
  \end{equation}
 Then, there exists a constant $C > 0$ depending only on $m$, $c$ and $\eta$ such that the following HWI inequality holds
  \begin{equation}
    \label{eq:hwi2}
    \Sigma(\rho_0) - \Sigma(\rho_1) \le \sqrt{D(\rho_0)}W_2(\rho_0, \rho_1) - CW_2(\rho_0, \rho_1)^{\eta + 2}.
  \end{equation}
\end{thm}
This variant of the HWI inequality (\ref{eq:hwi}) implies algebraic stability, meaning
\begin{equation}\label{result-sqrt}
  W_2(\mu_t, \mu_\infty) \le \f{C}{t^{1/\eta}},\ \forall t\ge 1.
\end{equation}

\subsection{Application to log gases}
The second contribution of this paper is the application of the previous results to log gases. Those gases are obtained by taking $W = -\log{|\cdot|}$ in the uni-dimensional case $d = 1$. It leads to the Fokker-Planck equation
\begin{equation}
\label{eq:fokker_planck}
\partial_t\mu_t = \partial_x\left[\mu_t\left(\inv{2}V' - H\mu_t\right)\right],
\end{equation}
where $H$ denotes the Hilbert transform.
The logarithmic potential has a singularity at the origin and is only convex on the half-lines $\R^*_+$ and $\R^*_-$. Because of these difficulties, the previous theorems do not apply directly and the derivation of the analogue of (\ref{eq:hwi}) is more involved. This is the purpose of Theorem \ref{thm:ramon}, which extends the application of HWI inequalities to log gases. 
\begin{thm}\label{thm:ramon}
  Let $V$ be a $C^2$ symmetric potential satisfying Assumptions (A1)- (A3) and let $W = -\log{|\cdot|}$.
  Then the conclusions of Theorem~\ref{thm:hwi1} hold for probability measures in $L^{\infty}(\R)$, with $\gamma =\inv{4r^{2}}$.
\end{thm}
This result and its proof have two main applications. The first concerns log gases under a general convex potential $V$. Under appropiate growth assumption of $V$, ensuring the existence and uniqueness of the minimizer of the entropy $\mu_{V}$, we prove that any solution converges towards $\mu_V$ at a square root rate. The definition of a solution to (\ref{eq:fokker_planck}) is given afterwards in Section~\ref{section-5}.
\begin{thm}\label{thm:log-alg}
  Let $V$ be a $C^2$ convex potential satisfying growth Assumptions~\ref{assump:growth}. Let $\mu_V$ the unique minimizer of the entropy and $(\mu_t)_{t\ge 0}$ be a solution of (\ref{eq:fokker_planck}). Then we have algebraic stability towards $\mu_V$
  \begin{equation}
    W_2(\mu_t, \mu_V)\le \f{C}{\sqrt{t}},\ \forall t\ge 1,
  \end{equation}
  for some constant $C>0$.
\end{thm}
To our knowledge, in the case of log gases, the weakest assumptions required for an explicit equilibrium were strict convexity at least away of the origin (see~\cite{Ledoux_2009}). Our result weakens this assumption, the cost being a slower rate.

\subsection{Log gases with quartic potentials}
The second application of Theorem \ref{thm:ramon} concerns quartic potentials $V$, defined by
\begin{assump} For some constants $g, c\in(-\infty, 0)$, $(C1)$ or $(C2)$ is satisfied
  \begin{enumerate}
    \item[(C1)] $V(x) = \f{x^4}{4} + c\f{x^2}{2}$ for $x\in\R$,
    \item[(C2)] $V(x) = g\f{x^4}{4} + \f{x^2}{2}$ for $x\in\R$.
  \end{enumerate}
\end{assump}
For $c < 0$, quartic potential (C1) is non-convex and does not fall under the scope of the work of Ledoux and Popescu ~\cite{Ledoux_2009}. After deriving moment estimates (which already imply stability and provide a simple proof to Theorem 1.1~\cite{donati2018convergence}), we apply Theorem~\ref{thm:ramon} to derive an exponential stability rate towards the unique minimizer $\mu_{V}$ of the entropy, for solutions with a fixed center of mass or with a symmetric initial data. 
\begin{thm}\label{thm:confining}
  Let $c\in\left(-\f{1}{4\sqrt{17}}, 0\right)$ and $V$ be defined by $(C1)$. Let $(\mu_t)_{t\ge 0}$ be a solution of (\ref{eq:fokker_planck}) with a fixed center of mass and finite fourth moments. Assume that the moment condition 
    \begin{equation}
      \label{moment-cond}
      \int_{\R}x^2\mu_0(dx) \le \f{-c + \sqrt{c^2+4}}{2},
    \end{equation}
    is satisfied. Then $(\mu_t)_{t\ge 0}$ is exponentially stable towards $\mu_V$ with respect to the Wasserstein distance
    \begin{equation}
      W_2(\mu_t, \mu_V)\le \sqrt{\f{2(\Sigma(\mu_0)- \Sigma(\mu_V))}{\lambda}}e^{-\lambda t},\ \forall t\ge 0.
    \end{equation}
    The rate $\lambda$ is given by
    \begin{equation}
          \lambda = \inv{16\left(-c+\sqrt{c^2+4}\right)} + \f{c}{2} > 0.
    \end{equation}
    Similarly, if $c\in \left(-\inv{\sqrt{6}}, 0\right)$ and if $(\mu_t)_{t\ge 0}$ is a solution of (\ref{eq:fokker_planck}) with finite fourth moments and a symmetric initial data $\mu_0$, then under the same moment condition, $(\mu_t)_{t\ge 0}$ is exponentially stable towards $\mu_V$ with respect to the Wasserstein distance with rate $\lambda$ given by
    \begin{equation}
          \lambda = \inv{2\left(-c+\sqrt{c^2+4}\right)} + \f{c}{2} > 0.
    \end{equation}
\end{thm}

The assumption of a fixed center of mass or a symmetric initial data are required to exploit easily the strict convexity of $W$. It leads to tractable computations and exact numerical values. We are able to relax those assumptions in the following theorem, and assume only lower-bounded second moments for the initial data. We prove exponential stability under weaker asssumptions but only for $c \in (c^*, 0)$ with $c^{*}\sim10^{-9}$.
\begin{thm}\label{thm:bonus}
  There exists $c^* < 0$, such that if $c\in(-c^{*}, 0)$ and if $(\mu_t)_{t\ge 0}$ is a solution of (\ref{eq:fokker_planck}) with finite sixth moments and initial data satisfying 
  \begin{equation}
    \left(\f{2}{-c+\sqrt{c^2+16}}\right)^4 \le \int x^2\mu_0(dx)
  \end{equation}
  and 
  \begin{equation}
    \int x^4\mu_0(dx)\le  \left(\f{-c+\sqrt{c^2+12}}{2}\right)^2,
  \end{equation}
  then $(\mu_t)_{t\ge 0}$ is exponentially stable towards the equilibrium $\mu_V$.
\end{thm}

For $g < 0$, a new difficulty arises due to the fact that the quartic potential (C2) is neither convex nor confining. Nevertheless, we manage to establish exponential stability for $g\in \left(-\inv{81+36\sqrt{5}}, 0\right)$ and for well-defined solutions. This result is stated in the following theorem and answers a conjecture formulated in \cite{biane2001free} (see Conjecture 7.3 \cite{li2013generalized} as well).

\begin{thm}
  \label{thm:biane}
  Let $g\in \left(- \inv{81+ 36\sqrt{5}}, 0\right)$ and $m > 0$ satisfying
  \begin{equation}
    m <\sqrt{- \inv{3g} - \f{4}{\sqrt{-g}} - 3}.
  \end{equation}
  Let $\mu_0$ be an initial measure with $supp(\mu_0)\subset [-m, m]$. Then any solution $(\mu_t)_{t\ge 0}$ of (\ref{eq:fokker_planck}) with quartic $V$ (C2) is well-defined and converges exponentially towards a stationary measure $\mu_\infty$
  \begin{equation}
  W_2(\mu_t, \mu_\infty) \le \sqrt{\f{2\left(\Sigma(\mu_0) - \Sigma(\mu_\infty)\right)}{\lambda}}e^{-\lambda t},\ \forall t\ge 0,
  \end{equation}
  with rate $\lambda$ given by
  \begin{equation}
    \lambda = \inv{2}\left[1 + 3g\left(m^2 + \f{4}{\sqrt{-g}} + 3\right)\right] > 0.
  \end{equation}
  Moreover, $\mu_\infty$ is a local minimizer of the entropy:
  \begin{equation}
    \forall \mu\in \mathcal{M}_2, \ supp(\mu)\subset \left( -\sqrt{m^2 + \f{4}{\sqrt{-g}} + 3},  \sqrt{m^2 + \f{4}{\sqrt{-g}} + 3}\right)\implies \Sigma(\mu_\infty)\le \Sigma(\mu).
  \end{equation}
\end{thm}

The paper is structured as follows. In Section \ref{section-2}, we recall some preliminary facts collected from \cite{villani2003topics} and \cite{carrillo2003kinetic}, and we introduce notation. Section \ref{section-3} contains the proof of Theorem \ref{thm:hwi1} and its corollary concerning stability of solutions. In Section \ref{section-4}, we establish the proof of Theorem \ref{thm:hwi2} and its implications. In Section \ref{section-5}, we consider log gases and prove Theorems \ref{thm:ramon} - \ref{thm:biane}. The appendix gathers auxiliary proofs.

 \section{Preliminaries and notations}\label{section-2}
 In the whole paper, we denote by $\mathcal{M}$ the set of probability measures on $\Rd$, and we define for $p\ge 1$
 \begin{equation}
   \mathcal{M}_p = \left\{\mu\in\mathcal{M} : \int_{x\in\Rd}|x|^p\mu(dx) < \infty\right\}.
 \end{equation}
We recall the definition of the Wasserstein metric $W_2$ on $\mathcal{M}_2$
\[W_2(\rho_0, \rho_1) = \bigg[\inf_{\gamma\in\Gamma(\rho_0, \rho_1)}\int_{\Rd\times\Rd}|x-y|^2\gamma(dx, dy)\bigg]^{1/2},\]
where $\Gamma(\rho_0, \rho_1)$ denotes the set of couplings between $\rho_0$ and $\rho_1$ (see \cite{villani2003topics}). According to Brenier Theorem, if $\rho_0$ and $\rho_1$ have a density, there exists a unique optimal transport map $T = \nabla \phi$, gradient of a convex function $\phi:\Rd\rightarrow\R$, $\rho_0$-almost everywhere, such that $\rho_1$ is the push-forward measure $\rho_1 = T\# \rho_0$ and such that
\begin{equation}
\label{eq:optimal_T}
    W_2(\rho_0, \rho_1) = \bigg[\int_{\Rd}|x - T(x)|^2\rho_0(dx)\bigg]^{1/2}.
\end{equation}

Recall from \cite{mccann1997convexity}, that a functional $F:\mathcal{M}_2\rightarrow\R$ is said to be displacement convex if $s\mapsto F(\rho_s)$ is a convex function, where $(\rho_s)_{s\in [0, 1]}$ is the geodesic in $(\mathcal{M}_2, W_2)$ joining $\rho_0$ to $\rho_1$:
\begin{equation}
    \label{eq:interpolation}
    (\rho_s)_{s\in [0, 1]} = ([(1-s)Id + sT]\#\rho_0)_{s\in [0, 1]}.
\end{equation}

Strict displacement convexity is a stronger property, which plays a crucial role in establishing equilibrium rates. The key to derive a HWI inequality is to prove that $\Sigma$ is $\lambda$-strictly convex along interpolation (\ref{eq:interpolation}):
\[\frac{d^2}{ds^2}\Sigma(\rho_s) \ge \lambda W_2(\rho_0, \rho_1)^2,\ 0 < s < 1.\]

We denote $\mathcal{V}$ and $\mathcal{W}$ the functionals
\begin{equation}
  \begin{split}
  \mathcal{V} &: \mu\in\mathcal{M} \to \inv{2}\int V(x)\mu(dx),\\
    \mathcal{W} &: \mu\in\mathcal{M}\to \inv{2}\iint W(x-y)\mu(dx)\mu(dy).
  \end{split}
\end{equation}

We define $(\mu_t)_{t\ge 0}\in C\left(\R_+, \mathcal{M}\right)$ as a solution of (\ref{eq:granular_media}) with initial data $\mu_0$ if for all $t\ge 0$, $\mu_t$ has a density, that we shall denote $\mu_t(x)$, such that $\nabla W *\mu_t\in L^\infty_{loc}(\R_+\times\Rd)$ and
\begin{equation}
  \int \phi d\mu_t - \int\phi d\mu_0 = -\int_0^tds\int\nabla\phi\cdot\nabla\left(\inv{2}V + W*\mu_s\right)d\mu_s,\ \forall \phi\in C_0^\infty(\Rd),
\end{equation}
where $C_0^\infty(\Rd)$ denotes the space of smooth and compactly supported test functions. 

Finally, we end this section by recalling that Proposition 2.1 \cite{carrillo2003kinetic} ensures existence of solutions to (\ref{eq:granular_media}), under additional technical assumptions concerning the regularity and the growth at infinity of the $C^2$ potentials $V$ and $W$, and establishes the dissipation property
\begin{equation}
  \label{eq:dissip}
  \f{d}{dt}\Sigma(\mu_t)\le - D(\mu_t),\ \forall t\ge 0,
\end{equation}
for $(\mu_t)_{t\ge 0}$ a solution. Moreover, it is straigthforward to prove that $\Sigma$ and $D$ are lower-semi continuous for the weak topology. In Section \ref{section-3}, we assume that those technical assumptions are satisfied. However, potentials $W$, as in Theorem \ref{thm:hwi2}, are not $C^2$ and Proposition 2.1 \cite{carrillo2003kinetic} cannot be applied.
As the purpose of this work is not to prove the dissipation property nor to discuss the existence of solutions, additional assumptions will be therefore assumed to guarantee the validity of Proposition 2.1~\cite{carrillo2003kinetic}.

\section{Proof of Theorem \ref{thm:hwi1}}\label{section-3}
\begin{proof}[Proof of Theorem \ref{thm:hwi1}]
  The main difficulty is to alleviate non-convexity of $V$ near the origin with the strict convexity of $W$, and conversely to use the strict convexity of $V$ outside a neighborhood of the origin to alleviate non-strict convexity of $W$ outside the origin.
  
  Let $\rho_0,\rho_1\in \mathcal{M}_2$ with finite entropy and the same center of mass. Let $T$ be the optimal transport map from $\rho_0$ to $\rho_1$ described by (\ref{eq:optimal_T}). Consider interpolation (\ref{eq:interpolation}) between $\rho_0$ and $\rho_1$ given by $(\rho_s)_{s\in [0, 1]}$.
  Set $\theta(x) = T(x)-x$. In these circumstances, we have
  \[ W_2(\rho_0, \rho_1)^2 = \int_{\R^d}|\theta(x)|^2\rho_0(dx) \]
  and
  \[ \int_{\R^d}f(x)\rho_s(dx) = \int_{\R^d}f(x + s\theta(x))\rho_0(dx)\]
  for all measurable bounded functions $f$.

  Taylor's formula applied to $\Sigma(\rho_t)$ between 0 and 1 gives
  \begin{equation}
    \label{eq:taylor}
    \Sigma(\rho_1) - \Sigma(\rho_0) = \frac{d}{ds}\bigg\vert_{0}\Sigma(\rho_s) + \frac{1}{2}\frac{d^2}{ds^2}\bigg\vert_{s^*}\Sigma(\rho_s),
  \end{equation}
  for some $s^*\in (0, 1)$.
  Following computations of Section 4.1 \cite{carrillo2003kinetic}, we find for all $s\in (0, 1)$
  \begin{equation}
    \label{eq:taylor_1}
    \frac{d}{ds}\bigg\vert_{0}\Sigma(\rho_s)\ge -\sqrt{D(\rho_0)}W_2(\rho_1,\rho_0)
  \end{equation}
  and
  \begin{equation}
    \label{eq:taylor_2}
    \begin{split}
      \frac{d^2}{ds^2}\Sigma(\rho_s)&\ge \underbrace{\inv{2}\int_{\Rd}\ps{D^2V(x + s\theta(x))\cdot\theta(x)}{\theta(x)}\rho_0(dx)}_{(\ref{eq:taylor_2}.1)}\\
      &+\underbrace{\inv{2}\int_{\R^{2d}}\ps{D^2W(x-y + s(\theta(x) - \theta(y))\cdot(\theta(x) - \theta(y))}{\theta(x) - \theta(y)}\rho_0(dx)\rho_0(dy)}_{(\ref{eq:taylor_2}.2)}.\\
    \end{split}
  \end{equation}

  On the one hand, we have using Assumptions $(A1)$ and $(A2)$
  \begin{equation}
    \label{eq:taylor_2_1}
   (\ref{eq:taylor_2}.1) \ge \f{\alpha}{2}\int_{|x + s\theta(x)|\ge r}|\theta(x)|^2\rho_0(dx) - \f{\beta}{2}\int_{|x+s\theta(x)|<r}|\theta(x)|^2\rho_0(dx).
  \end{equation}

  On the other hand, under $(A3)$
  \begin{equation}
    \label{eq:taylor_2_2}
    \begin{split}
      (\ref{eq:taylor_2}.2) &\ge\inv{2} \int\limits_{\substack{|x+s\theta(x)|\le r\\|y+s\theta(y)|\le r}}\ps{D^2W(x - y + s(\theta(x)-\theta(y)))\cdot(\theta(x)-\theta(y))}{\theta(x)-\theta(y)}\rho_0(dx)\rho_0(dx)\\
      &\ge\inv{2}\gamma\int\limits_{\substack{|x+s\theta(x)|\le r\\|y+s\theta(y)|\le r}}\big(|\theta(x)|^2 + |\theta(y)|^2 - 2\ps{\theta(x)}{\theta(y)}\big)\rho_0(dx)\rho_0(dy)\\
      &\ge\gamma\int\limits_{\substack{|x+s\theta(x)|\le r}}\rho_0(dx) \int\limits_{\substack{|x+s\theta(x)|\le r}}|\theta(x)|^2\rho_0(dx) - \gamma\left|\ \int\limits_{\substack{|x+s\theta(x)|\le r}}\theta(x)\rho_0(dx)\right|^2.\\
    \end{split}
  \end{equation}

Using the fact that $\int_{\Rd}x\rho_0(dx) = \int_{\Rd}x\rho_1(dx) $, we can write
\begin{equation}
    \label{eq:center_of_mass}
    \int\limits_{\substack{|x+s\theta(x)|\le r}}\theta(x)\rho_0(dx)=-\int\limits_{\substack{|x+s\theta(x)|> r}}\theta(x)\rho_0(dx)
\end{equation}
and by Cauchy-Schwarz inequality
\begin{equation}
    \label{eq:double_term}
    \gamma\left|\ \int\limits_{\substack{|x+s\theta(x)|\le r}}\theta(x)\rho_0(dx)\right|^2 = \gamma\left|\ \int\limits_{\substack{|x+s\theta(x)|> r}}\theta(x)\rho_0(dx)\right|^2 \le \gamma\int\limits_{\substack{|x+s\theta(x)|> r}}\rho_0(dx)\int\limits_{\substack{|x+s\theta(x)|>r}}|\theta(x)|^2\rho_0(dx).
\end{equation}

The first integral in the left-hand side is equal to $\int_{|x|> r}\rho_s(dx)$, and therefore combining estimations (\ref{eq:taylor_2_2}) and (\ref{eq:double_term}), it follows
\begin{equation}
     \label{eq:taylor_2_2_bis}
(\ref{eq:taylor_2}.2) \ge \gamma\left[\left(1-\int\limits_{|x|> r}\rho_s(dx)\right)\int\limits_{\substack{|x+s\theta(x)|\le r}}|\theta(x)|^2\rho_0(dx)  - \int\limits_{|x|> r}\rho_s(dx)\int\limits_{\substack{|x+s\theta(x)|> r}}|\theta(x)|^2\rho_0(dx) \right]
\end{equation}

Noticing that for $s\in [0, 1]$
\begin{equation}
  \label{estimate-rho-s}
 \int_{|x|> r}\rho_s(dx)\le \int_{|x|>r}\rho_0(dx) + \int_{|T(x)|>r}\rho_0(dx) = \int_{|x|> r}\rho_0(dx)+ \int_{|x|> r}\rho_1(dx) \le 2P_r 
\end{equation}
we conclude from (\ref{eq:taylor_2}), (\ref{eq:taylor_2_1}), (\ref{eq:taylor_2_2_bis}) and (\ref{estimate-rho-s}), by setting
  \[\lambda =\f{\min\left(\alpha, 2\gamma -\beta\right)}{2} -2\gamma P_r\]
  that
  \begin{equation}
    \label{eq:taylor_2_final}
    \frac{d^2}{ds^2}\bigg\vert_{s^*}\Sigma(\rho_s) \ge \lambda\int_{\Rd}|\theta(x)|^2\rho_0(dx) = \lambda W_2(\rho_1,\rho_0)^2.
  \end{equation}

  Combining (\ref{eq:taylor}), (\ref{eq:taylor_1}) (\ref{eq:taylor_2_final}), yields
  \begin{equation}
  \label{eq:hwi_proof}
        \Sigma(\rho_0) - \Sigma(\rho_1) \le \sqrt{D(\rho_0)}W_2(\rho_0,\rho_1)-\f{\lambda }{2}W_2(\rho_0,\rho_1)^2.
  \end{equation}
  
  In the special case when $\rho_0$ and $\rho_1$ are symmetric, we observe that the quantity (\ref{eq:center_of_mass}) vanishes. Indeed, define
  \[ A = \{x\in\Rd: |(1-s)x+sT(x)|\le r\}\]
  and
  \[ c = \int_{A}(x - T(x))\rho_0(dx) =  \int\limits_{\substack{|x+s\theta(x)|\le r}}\theta(x)\rho_0(dx).\]
  From the uniqueness of the optimal transport map $T$, the symmetry of $\rho_0$ and $\rho_1$ and the fact that $W_2(\rho_0, \rho_1)^2 = \int_{\Rd}(x + T(-x))^2\rho_0(dx)$, we see that the map $T$ is odd. Consequently, $A$ is a symmetric domain and $x\mapsto x - T(x)$ is odd. Therefore, $c = 0$ and (\ref{eq:hwi_proof}) holds with
     \[\lambda \equiv \f{\min\left(\alpha,2\gamma\big(1 - 2P_r)-\beta\right)}{2}.\]     
\end{proof}

We derive the following  asymptotic behavior and inequalities from (\ref{eq:hwi}).

\begin{coro}
\label{coro:hwi1}
Assume that Assumptions \ref{assumpA} are satisfied and that $\Sigma$ is lower-bounded. Let $(\mu_t)_{t\ge 0}$ be a solution of (\ref{eq:granular_media}) with a fixed center of mass. Define
\begin{equation}
  P_r = \sup_{t\ge 0}\int_{|x|>r}\mu_t(dx)
\end{equation}
and assume that 
  \begin{equation}
    \lambda = \f{\min\left(\alpha,2\gamma-\beta\right)}{2} - 2\gamma P_r > 0.
  \end{equation}
If the family $(\mu_t)_{t\ge 0}$  is tight with respect to the weak topology then $(\mu_t)_{t\ge 0}$ exponentially converges, with respect to the Wasserstein distance, to the unique minimizer $\mu_\infty$ of the entropy $\Sigma$ among the class of probability measures $\rho$ satisfying
  \begin{equation}
  \label{eq:centertailcond}
      \int_{\Rd}x\rho(dx) = \int_{\Rd}x\mu_0(dx)\ \textrm{ and }\ \Prob_{\rho}(|x|> r)\le \sup_{t\ge 0}\Prob_{\mu_t}(|x|> r).
  \end{equation}
  Moreover, the following inequalities hold:
  \begin{enumerate}[(i)]
  \item Logarithmic Sobolev inequality
    \begin{equation}
      \label{eq:logsob}
      2\lambda\left(\Sigma(\mu_t) -\Sigma(\mu_\infty)\right)\le D(\mu_t),\ \forall t\ge 0.
    \end{equation}
  \item Transportation inequality
    \begin{equation}
      \label{eq:transp}
    W_2(\mu_t, \mu_\infty)\le\sqrt{\frac{2\left(\Sigma(\mu_t) - \Sigma(\mu_\infty)\right)}{\lambda}},\ \forall t\ge 0.
    \end{equation}
    \item Exponential stability
        \begin{equation}
        \label{eq:rateofcv}
    W_2(\mu_t, \mu_\infty) \le \sqrt{\frac{2\left(\Sigma(\mu_t) - \Sigma(\mu_\infty)\right)}{\lambda}}e^{-\lambda t},\ \forall t\ge 0.
        \end{equation}
  \end{enumerate}
  Finally, if $\mu_0$ is symmetric and
  \begin{equation}
      \lambda = \f{\min\left(\alpha,2\gamma\big(1 - 2P_r)-\beta\right)}{2} > 0,
  \end{equation}
  then the same inequalities hold with this better rate.
\end{coro}
\begin{proof}
Let $\mu_\infty$ be a limit point of $(\mu_t)_{t\ge 0}$ for the weak topology. By lower-semi-continuity of $D$
  \[ \limsup_{t\rightarrow\infty}\frac{d}{dt}\Sigma(\mu_t) \le -\liminf_{t\rightarrow\infty}D(\mu_t) \le - D(\mu_\infty).\]
If $-D(\mu_\infty)< 0$, then $\limsup_{t\rightarrow\infty}\frac{d}{dt}\Sigma(\mu_t) \le- D(\mu_\infty) < 0$ and there exist $t_0$ and $c$ such that $\Sigma(\mu_t) \le -\frac{1}{2}D(\mu_\infty)t + c$ for $t > t_0$. That is $\Sigma(\mu_t) \xrightarrow{t\rightarrow\infty} -\infty$. As $\Sigma$ is bounded below by assumption, $D(\mu_\infty) = 0$ and $\mu_\infty$ is a stationary solution.

Moreover, by weak convergence, $\mu_\infty$  will satisfy conditions (\ref{eq:centertailcond}). We can therefore apply Theorem \ref{thm:hwi1} to $(\rho_0, \rho_1)=(\mu_t, \mu_\infty)$ (notice that in the case of $\mu_0$ symmetric, $\mu_t$ stays symmetric at all times for $t>0$ by symmetry of the potentials). According to the HWI inequality (\ref{eq:hwi})
\[ \Sigma(\mu_t) - \Sigma(\mu_\infty) - \sqrt{D(\mu_t)}W_2(\mu_t, \mu_\infty) +  \f{\lambda}{2} W_2(\mu_t, \mu_\infty)^2\le 0,\ \forall t\ge 0.\]
Consequently the following discriminant is non-negative
\begin{equation}
D(\mu_t) - 2\lambda\left(\Sigma(\mu_t) - \Sigma(\mu_\infty)\right) \ge 0,\ \forall t\ge 0
\end{equation}
and the log-Sobolev inequality (\ref{eq:logsob}) holds.

For the transportation inequality (\ref{eq:transp}), take $\rho_1 = \mu_t$ and $\rho_0 = \mu_\infty$. $\mu_\infty$ being a stationary solution of (\ref{eq:granular_media}), $D(\rho_0) = 0$, which gives
\begin{equation}
    \label{eq:proof_mini}
     W_2(\mu_t,\mu_\infty)^2 \le \f{2\left(\Sigma(\mu_t) - \Sigma(\mu_\infty)\right)}{\lambda},\ \forall t\ge 0.
\end{equation}
This proves the transportation inequality. Since measure $\mu_t$ can be replaced by any measure $\rho$ satisfying (\ref{eq:centertailcond}) (apply Theorem \ref{thm:hwi1} to $(\mu_\infty, \rho)$), we have
\begin{equation}
\Sigma(\rho) - \Sigma(\mu_\infty)  \ge 0
\end{equation}
and if $\Sigma(\rho) = \Sigma(\mu_\infty)$ it follows that $W_2(\rho,\mu_\infty)=0$.
Therefore,  $\mu_\infty$  is the unique minimizer of the entropy $\Sigma$ among the class of probability measures $\rho$ satisfying (\ref{eq:centertailcond}).

Finally, to prove (\ref{eq:rateofcv}), use successively the log-Sobolev inequality (\ref{eq:logsob}), property (\ref{eq:dissip}), Gronwall's lemma and the transportation inequality (\ref{eq:transp}).
\end{proof}

\section{Proof of Theorem~\ref{thm:hwi2}}\label{section-4}
As explained at the end of Section~\ref{section-2}, the singularity of $W$ at the origin requires additional results, that should be proved for each $W$ of application. Indeed, potentials of interest which satisfy $D^2W(x)\ge \f{c}{|x|^\eta}$ like $W=-\log{|\cdot|}$ or $W = |\cdot|^p$, $p \in[0, 2)$, do not satisfy the assumptions of Proposition 2.1~\cite{carrillo2003kinetic}. Therefore, the dissipation property (\ref{eq:dissip}) does not hold necessarily. Moreover, formula (\ref{eq:taylor_2}) is not justified. To overcome those technical difficulties, we introduce the following additional assumptions.
\begin{assump} $V\in C^2(\Rd)$ and $W\in C^2(\Rd-\{0\})$ satisfy
  \label{assumpD}
  \begin{itemize}
  \item[(D1)] Dissipation property (\ref{eq:dissip}):
    \begin{equation}
      \f{d}{dt}\Sigma(\mu_t)\le - D(\mu_t),\ \forall t\ge 0.
    \end{equation}
  \item[(D2)] For any geodesic $(\rho_s)_{0\le s\le 1} = (\left[(1-s)Id + sT\right]\#\rho_0)_{0\le s\le 1}$, $\rho_0, \rho_1\in\mathcal{M}_{2}$ with finite entropy, the function $s\in[0, 1]\to \mathcal{W}({\rho_s})$ is twice differentiable and for all $s\in (0, 1)$
    \begin{equation}
      \frac{d^2}{ds^2}\mathcal{W}(\rho_s)\ge\inv{2}\int\limits_{\substack{x,y\in\R^{d}\\x\neq y}}\ps{D^2W(x-y + s(\theta(x) - \theta(y))\cdot(\theta(x) - \theta(y))}{\theta(x) - \theta(y)}\rho_0(dx)\rho_0(dy)
    \end{equation}
    where $\theta(x) = x - T(x)$.
  \end{itemize}
  
\end{assump}

We are now ready for the proof of Theorem \ref{thm:hwi2}.
\begin{proof}[Proof of Theorem \ref{thm:hwi2}]
  Under Assumptions \ref{assumpB}, $\mathcal{V}$ is convex-displacement
  \begin{equation}
    \frac{d^2}{ds^2}\mathcal{V}(\rho_s)\ge 0.
  \end{equation}

  In order to treat $\mathcal{W}$, we follow the proof of Theorem \ref{thm:hwi1} by fixing some $r>0$ and taking $\gamma = \inv{(2r)^\eta}$. We have immediately
\begin{equation}
 \frac{d^2}{ds^2}\mathcal{W}(\rho_s)\ge\f{c}{(2r)^\eta}\left[\left(1-\mathbb{P}_{\rho_s}(|x|> r)\right)\int\limits_{\substack{|x+s\theta(x)|\le r}}|\theta(x)|^2\rho_0(dx)  - \mathbb{P}_{\rho_s}(|x|> r)\int\limits_{\substack{|x+s\theta(x)|\ge r}}|\theta(x)|^2\rho_0(dx) \right]
\end{equation}
and therefore
\begin{equation}
  \frac{d^2}{ds^2}\mathcal{W}(\rho_s)\ge \f{c}{(2r)^\eta}\mathbb{P}_{\rho_s}(|x|\le r)\int_{\Rd}|\theta(x)|^2\rho_0(dx)  - \f{c}{(2r)^\eta}\int\limits_{\substack{|x+s\theta(x)|\ge r}}|\theta(x)|^2\rho_0(dx).
\end{equation}

By Cauchy-Schwarz inequality, we have the estimate
\begin{equation}
\int\limits_{\substack{|x+s\theta(x)|\ge r}}|\theta(x)|^2\rho_0(dx)\le \sqrt{\int_{\Rd}|\theta(x)|^4\rho_0(dx)}\sqrt{\Prob_{\rho_s}(|x|\ge r)}.
\end{equation}
Using the fact that
\begin{equation}
  \int_{\Rd}|\theta(x)|^4\rho_0(dx)\le 8m,
\end{equation}
we deduce
\begin{equation}
 \frac{d^2}{ds^2}\mathcal{W}(\rho_s)\ge \frac{c}{(2r)^\eta}\mathbb{P}_{\rho_s}(|x|\le r)W^2_2(\rho_0, \rho_1)  - \f{8^{1/4}m^{1/4}c}{(2r)^\eta}\Prob_{\rho_s}(|x|\ge r)^{1/4}W_2(\rho_0, \rho_1).
\end{equation}

The tail probability $\Prob_{\rho_s}(|x|\ge r)$ can be estimated by
\begin{equation}
  \Prob_{\rho_s}(|x|\ge r)\le \inv{r^4}\int_{\Rd}|sx + (1-s)T(x)|^4\rho_0(dx)\le \frac{8m}{r^4}.
\end{equation}
Moreover, with the simple estimate $W_2(\rho_0, \rho_1)^2\le 2\sqrt{m}$, we get
\begin{equation}
 \frac{d^2}{ds^2}\mathcal{W}(\rho_s) \ge\f{c}{(2r)^\eta}W^2_2(\rho_0, \rho_1)  - \frac{\sqrt{8m}c}{2^\eta r^{\eta+1}}W_2(\rho_0, \rho_1) - \frac{4mc}{2^\eta r^{\eta+2}}.
\end{equation}

Choosing optimally $r$ yields for some constant $C>0$ depending only on $\eta$, $m$ and $c$ that
\begin{equation}
 \frac{d^2}{ds^2}\mathcal{W}(\rho_s) \ge C W_2(\rho_0, \rho_1)^{\eta + 2},\ \forall s\in (0, 1),
\end{equation}
from which we deduce the desired HWI inequality.
\end{proof}

\begin{coro}
  \label{coro:hwi2}
  Assume that Assumptions \ref{assumpB} and \ref{assumpD} are satisfied. Let $(\mu_t)_{t\ge 0}$ be a solution of (\ref{eq:granular_media})  with uniformly fourth order moments. Set
  \begin{equation}
    m = \sup_{t\ge 0}\max{\left(\int_{\Rd}|x|^4\mu_t(dx)\right)}.
  \end{equation}
  Then, the following holds for some positive constants $\delta, C$ depending only on $m$, $c$ and $\eta$:
  \begin{enumerate}[(i)]
  \item Algebraic decay of the entropy
    \begin{equation}\label{decay-estimate}
  \Sigma(\mu_t)-\Sigma(\mu_\infty)\le \frac{\Sigma(\mu_0)-\Sigma(\mu_\infty)}{\left[1 + \delta \left(\Sigma(\mu_0)-\Sigma(\mu_\infty)\right)^{\eta/(\eta + 2)} t\right]^{(\eta + 2)/\eta}},\ \forall t\ge 0.
    \end{equation}
  \item Transportation inequality
    \begin{equation}\label{transpo}
  W_2(\mu_t, \mu_\infty) \le C\left(\Sigma(\mu_t)-\Sigma(\mu_\infty)\right)^{\inv{\eta+2}},\ \forall t\ge 0.
    \end{equation}
  \end{enumerate}

  Therefore, we have algebraic stability with respect to $W_2$ towards the minimizer $\mu_\infty$ of the entropy among the class of probability measures $\rho$ satisfying
  \begin{equation}
 \sup_{t\ge 0}\int_{\Rd}|x|^4\mu_t(dx)\le m.
  \end{equation}
\end{coro}
\begin{proof}
  The proof is similar to the proof of Corollary \ref{coro:hwi1}. The boundedness of moments gives tightness and we check easily that a limit point $\mu_\infty$ (for the weak topology) is a stationary solution.
Taking $(\rho_0, \rho_1) = (\mu_\infty, \mu_t)$ in HWI inequality (\ref{eq:hwi2}) and noticing that the minimum of power function in $W_2(\mu_t, \mu_\infty)$ (\ref{eq:hwi2}) is non-positive, we derive 
\begin{equation}
  C'\left(\Sigma(\mu_t)-\Sigma(\mu_\infty)\right)^{1+\frac{\eta}{\eta+2}} \le D(\mu_t)\le - \frac{d}{dt}\left[\Sigma(\mu_t)-\Sigma(\mu_\infty)\right],\ \forall t\ge 0,
\end{equation}
where $C'$ depends only on $ m, c$ and $\eta$. Integrating leads to the decay estimate (\ref{decay-estimate}). Taking $(\rho_0, \rho_1) = (\mu_t, \mu_\infty)$ in (\ref{eq:hwi2}) allows us to derive the transportation inequality (\ref{transpo}). Combining the two inequalities proves result (\ref{result-sqrt}).
  
\end{proof}

\section{Application to log gases}\label{section-5}
This section is devoted to the asymptotic behavior of uni-dimensional log gases. In all of the following, $V$ will denote a symmetric external potential and $W$ the logarithmic interaction $W = -\log{|\cdot|}$. Entropy (\ref{eq:entropy}) is now half of the free entropy introduced by Voiculescu in~\cite{voiculescu1993analogues}
\[ \Sigma(\mu) = \inv{2}\int_{\R}V(x)\mu(dx) - \inv{2}\int_{\R\times\R}\log{|x-y|}\mu(dx)\mu(dy).\]
Define the Hilbert transform of a measure $\mu\in\mathcal{M}$
  \[ H\mu(x) \equiv  -(W*\mu)'(x) =
      p.v.\int_{\R} \inv{x-y}\mu(dy).
      \]
where $p.v.$ denotes the principal value.

The granular media equation becomes the Fokker-Planck (\ref{eq:fokker_planck}), with the weak formulation:
\begin{equation}
\frac{d}{dt}\int_{\R} f(x)\mu_t(dx) = -\int_{\R} f'(x)\left(\inv{2}V'(x) - H\mu_t(x)\right)\mu_t(dx),\ \forall f\in \mathcal{C}_0^\infty(\R),
\end{equation}
or equivalently
\begin{equation}
\label{eq:fokker_planck_bis}
\frac{d}{dt}\int_{\R}f(x)\mu_t(dx) =\inv{2}\iint_{\R\times\R}\frac{f'(x) - f'(y)}{x - y}\mu_t(dx)\mu_t(dy) - \inv{2}\int_{\R} V'(x)f'(x)\mu_t(dx),\ \forall f\in \mathcal{C}_0^\infty(\R),
\end{equation}
In the rest of the paper, we say that $(\mu_t)_{t\ge0}\in C(\R_+, \mathcal{M})$, with initial data $\mu_0\in \mathcal{M}_2\cap L^{\infty}(\R)$ with finite entropy, is a solution of (\ref{eq:fokker_planck}) if (\ref{eq:fokker_planck_bis}) is satisfied and $\mu_t\in L^{\infty}(\R)$ for all $t > 0$.

Under the assumption
\begin{equation}
  \label{growth-V-log}
  \lim_{|x|\to\infty}V(x) - 2\log{|x|} = +\infty
\end{equation}
the entropy is lower-bounded and there exists a unique minimizer $\mu_V$ (see~\cite{saff2013logarithmic})
\begin{equation}
  \Sigma(\mu_V)\le \Sigma(\mu),\ \forall \mu\in\mathcal{M}.
\end{equation}
The entropy dissipation (\ref{eq:information}) is given by
\[D(\mu) = \int_\R \left|\inv{2}V'(x)-H\mu(x)\right|^2\mu(dx).\]

The existence of solutions and the dissipation property (\ref{eq:dissip}) has been essentially proved by Biane and Speicher in \cite{biane2001free} (Theorem 3.1 and Proposition 6.1, see also \cite{li2013generalized}) under the assumption that $V$ is $C^2$ and satisfies the growth assumption
\begin{equation}
  \label{growth-V}
  ax^2 + b \le \inv{2}xV'(x),\ \forall x\in \R,
\end{equation}
for some $a>0$ and $b\in\R$.

Combining conditions (\ref{growth-V-log}) and (\ref{growth-V}), we introduce the following assumptions ensuring the existence of a minimizer and the gradient flow property.
\begin{assump} There exist $a>0$ and $b\in\R$ such that
  \label{assump:growth}
  \begin{itemize}
  \item $\lim_{|x|\to\infty}V(x) - 2\log{|x|} = +\infty$,
  \item $ax^2 + b \le \inv{2}xV'(x),\ \forall x\in \R$.
  \end{itemize}
\end{assump}

This section is organized as follows. First, in order to prove Theorem \ref{thm:ramon}, we establish that $W=-\log{|\cdot|}$ satisfies a property similar to Assumption \ref{assumpD} (D2), which allows therefore to apply Theorems \ref{thm:hwi1} and \ref{thm:hwi2} to log gases. We provide then in Proposition \ref{prop:a_priori_est} uniform bounds for the moments of a solution to (\ref{eq:fokker_planck}). Those estimates are essential to prove tightness with respect to the Wasserstein distance and to bound uniformly the quantities $P_r$. We state and prove Theorem \ref{thm:log-alg} that gives algebraic convergence for any convex potential $V$. Finally, we consider quartic potential $V$ and prove in Theorems \ref{thm:confining} and \ref{thm:biane} exponential stability, when the parameters $c$ and $g$ are small enough in absolute value.

\subsection{HWI inequality}
Despite the singularity of $-\log{|\cdot|}$ at the origin and its non-convexity, we prove that $\mathcal{W}$ is convex-displacement and lower-bound explicitly the second order derivative. The case of log gases will then fall under the scope of application of Theorems \ref{thm:hwi1} and \ref{thm:hwi2}. Let first recall the following well known simple fact.
\begin{lem}
  \label{lem-opti-dim1}
  Let $\rho_0$ and $\rho_1$ be two measures on $\R$ with bounded positive densities. The optimal transport map $T$ carrying $\rho_0$ to $\rho_1$ is given by
  \begin{equation}
    \label{opti-transp}
    T = F_{\rho_1}^{-1}\circ F_{\rho_0}
  \end{equation}
  where $F_\rho$ denotes the CDF of measure $\rho$.
  Moreover, $T$ is derivable and for all $r>0$
  \begin{equation}
    \sup_{|x|\le r}T'(x) \le \f{\sup_{|x|\le r}\rho_0(x)}{\inf_{|x|\le r}\rho_1(T(x))} < \infty.
  \end{equation}
\end{lem}

We now prove Theorem~\ref{thm:ramon}.
\begin{proof}[Proof of Theorem~\ref{thm:ramon}]
  We first prove the result for measures with bounded positive densities. Let $\rho_0, \rho_1\in\mathcal{M}_2\cap L^{\infty}(\R)$ be as such. Assume either that $\rho_0$ and $\rho_1$ have the same center of mass or are both symmetric. Let $T$ be optimal transport map carrying $\rho_0$ to $\rho_1$
  \begin{equation}
    T = F^{-1}_{\rho_1}\circ F_{\rho_0}.
  \end{equation}
  and $(\rho_s)_{0\le s\le 1}$ the geodesic from $\rho_0$ to $\rho_1$
  \begin{equation}
    \rho_s = \left((1-s)Id + sT\right) \# \rho_0.
  \end{equation}
  Let $\epsilon\in \left(0, \inv{2}\right)$, $\eta > 0$ and $B_\eta = \{x\in\R : |x|\le \eta\}$.

  Using the monotonicity of $T$ and Lemma \ref{lem-opti-dim1} and setting $R(x, y) \equiv \f{T(x) - T(y)}{x-y}$, we have
  \begin{equation}
    \label{R-upper-bound}
    0\le \sup\limits_{\substack{x, y\in B_\eta\\ x\neq y}}R(x, y)\le \f{\sup\limits_{|x|\le \eta}\rho_0(x)}{\inf\limits_{|x|\le T(\eta)}\rho_1(x)} < \infty.
  \end{equation}
  Introduce
  \begin{equation}
    L_\epsilon(x, y)= \f{R(x, y) - 1}{1-\epsilon + \epsilon R(x, y)},
  \end{equation}
  such that for all $s\in (\epsilon, 1-\epsilon)$ and $x\neq y$
  \begin{equation}
    \left|\f{(1-s)(x - y) + s(T(x) - T(y))}{(1-\epsilon)(x-y) + \epsilon (T(x) - T(y))}\right| = 1 + (s-\epsilon)L_\epsilon(x, y).
  \end{equation}
  We see from the monotonicity of $T$ and bound (\ref{R-upper-bound}) that for all $s\in (\epsilon, 1-\epsilon)$
  \begin{equation}
    \label{L-lower-bound}
    1 + (s-\epsilon)L_\epsilon(x, y) = \f{1- s + sR(x, y)}{1-\epsilon + \epsilon R(x, y)} \ge \f{\epsilon}{1-\epsilon}\ge \epsilon > 0
  \end{equation}
  and that for all distinct $x$ and $y$ in $B_\eta$
  \begin{equation}
    \label{L-upper-bound}
    1 + (s-\epsilon)L_\epsilon(x, y) \le \f{1 + R(x, y)}{1-\epsilon}\le 2 + 2\sup\limits_{\substack{x, y\in B_\eta\\ x\neq y}}R(x, y) < \infty.
  \end{equation}

  Define
  \begin{equation}\label{omega-eps}
    \omega_\epsilon = -\inv{2}\iint L_\epsilon(x, y)\rho_0(dx)\rho_0(dy).
  \end{equation}
  
  Using the inequality $-\log{(1+x)} + x\ge 0$ for $x + 1> 0$, we deduce
  \begin{equation}
    \label{w-drop}
    \mathcal{W}(\rho_s) - \mathcal{W}(\rho_\epsilon) - (s-\epsilon)\omega_\epsilon \ge \inv{2}\iint\limits_{x, y\in B_\eta}[-\log{\left(1 + (s-\epsilon)L_\epsilon(x, y)\right)} + (s-\epsilon)L_\epsilon(x, y)]\rho_0(dx)\rho_0(dy).
  \end{equation}

  Define the functions
  \begin{equation}
    G_V : s\in [\epsilon, 1-\epsilon] \to \mathcal{V}(\rho_s) = \inv{2}\int V(x + s\theta(x))\rho_0(dx)
  \end{equation}
  and
  \begin{equation}
    G_W : s\in [\epsilon, 1-\epsilon] \to \inv{2}\iint_{x, y\in B_\eta}\left[-\log{\left(1 +  (s-\epsilon)L_\epsilon(x, y)\right)}\right]\rho_0(dx)\rho_0(dy).
  \end{equation}
  By the regularity of $V$, $G_V$ is twice derivable with
  \begin{equation}
    G_V'(\epsilon) = \inv{2}\int V'(x + \epsilon \theta(x))\theta(x)\rho_0(dx)
  \end{equation}
  and for all $s\in (\epsilon, 1-\epsilon)$
  \begin{equation}
    G_V''(s) = \inv{2}\int V''(x + s\theta(x))\theta(x)^2\rho_0(dx).
  \end{equation}

  By bounds (\ref{L-lower-bound}) and (\ref{L-upper-bound}), $G_W$ is twice derivable as well and its derivatives satisfy
  \begin{equation}
    G_W'(\epsilon) = - \inv{2}\iint_{x, y\in B_\eta} L_\epsilon(x, y) \rho_0(dx)\rho_0(dy)
  \end{equation}
  and for all $s\in (\epsilon, 1-\epsilon)$
  \begin{align}
    G_W''(s) &= \inv{2}\iint_{x, y\in B_\eta} \f{L_\epsilon(x, y)^2}{(1 + (s-\epsilon)L_\epsilon(x, y))^2}\rho_0(dx)\rho_0(dy).
    \end{align}

    Notice that for all $x\neq y$ and $s\in (\epsilon, 1 - \epsilon)$
    \begin{equation}
      \f{L_\epsilon(x, y)^2}{(1 + (s-\epsilon)L_\epsilon(x, y))^2} = \f{(R-1)^2}{(1-s + sR)^2}\le \f{R^2}{(1 - s + sR)^2} + \f{1}{(1-s+ sR)^2}\le \f{2}{\epsilon^2}.
    \end{equation}

    Therefore,
    \begin{equation}
   \iint_{x, y\notin B_\eta} \f{L_\epsilon(x, y)^2}{(1 + (s-\epsilon)L_\epsilon(x, y))^2}\rho_0(dx)\rho_0(dy)
   \le \f{2}{\epsilon^2}\left[\int_{|x|\ge \eta}\rho_0(dx)\right]^2.
    \end{equation}

    We deduce that for all $s\in (\epsilon, 1-\epsilon)$
    \begin{equation}
      G_W''(s)\ge \inv{2}\iint\f{(\theta(x) - \theta(y))^2}{(x - y + s(\theta(x) - \theta(y)))^2}\rho_0(dx)\rho_0(dy) -  \f{1}{\epsilon^2}\left[\int_{|x|\ge \eta}\rho_0(dx)\right]^{2}.
    \end{equation}

  Using (\ref{w-drop}) and Taylor's formula, we deduce for $u\in (\epsilon, 1-\epsilon)$ such that
  \begin{equation}
    \begin{split}
      \Sigma(\rho_{1-\epsilon}) - \Sigma(\rho_\epsilon) - (1-2\epsilon)(G_V'(\epsilon) + \omega_\epsilon)&\ge (G_V + G_W)(1-\epsilon) - (G_V+G_W)(\epsilon)\\
                                                                                                         &- (1-2\epsilon)(G_V+G_W)'(\epsilon)\\
                                                                                                         &= \inv{2}(G_V + G_W)''(u)(1-2\epsilon)^2
    \end{split}
  \end{equation}
  where 
  \begin{equation}
    \label{eq:ramon-0}
    \begin{split}
    (G_V+G_W)''(u) \ge \inv{2}\int V''(x + u\theta(x))\theta(x)^2\rho_0(dx)
&+ \inv{2}\iint\f{(\theta(x) - \theta(y))^2}{(x - y + u(\theta(x) - \theta(y)))^2}\rho_0(dx)\rho_0(dy)\\ &-  \f{1}{\epsilon^2}\left[\int_{|x|\ge \eta}\rho_0(dx)\right]^{2}.
    \end{split}
  \end{equation}

  From there, we follow the computations of the proof of Theorem \ref{thm:hwi1}. Firstly notice that
  
    \begin{align}
      \lim_{\epsilon\to 0}[G_V'(\epsilon) + \omega_\epsilon] &= \inv{2}\int V'(x)\theta(x)\rho_0(dx) - \inv{2}\int [R(x, y) - 1]\rho_0(dx)\rho_0(dy)\\
                                 &= \int\left(\inv{2}V'(x) - H\rho_0(x)\right)\rho_0(dx)\\
                                 &\ge -\sqrt{D(\rho_0)}W_2(\rho_0, \rho_1).
    \end{align}

  Secondly, taking $\gamma = \inv{4r^{2}}$ and following (\ref{eq:taylor_2_1}) - (\ref{eq:hwi_proof}), we obtain
  \begin{equation}
    \label{eq:ramon-1}
    \inv{2}\int V''(x + u\theta(x))\theta(x)^2\rho_0(dx)
    + \inv{2}\iint\f{(\theta(x) - \theta(y))^2}{(x - y + u(\theta(x) - \theta(y)))^2}\rho_0(dx)\rho_0(dy)\ge \lambda_r W_2(\rho_0, \rho_{1-\epsilon})^2,
  \end{equation}
  where $\lambda_r$ is given either by (\ref{lambda-fcm}) or (\ref{lambda-sym}).

  Notice that this estimate is independent of $u\in (\epsilon, 1-\epsilon)$.
  Recalling that $W_2(\rho_0, \rho_{1-\epsilon}) = (1-\epsilon)W_2(\rho_0, \rho_1)$, we conclude that for all $\eta >0$ 
  \begin{equation}
    \label{eq:ramon}
    \Sigma(\rho_{1-\epsilon}) - \Sigma(\rho_\epsilon) \ge (1-2\epsilon)[G'_V(\epsilon) + \omega_\epsilon] + \lambda_r(1-\epsilon)^2W_2(\rho_0, \rho_1)^2 - \f{1}{\epsilon^2}\left[\int_{|x|\ge \eta}\rho_0(dx)\right]^{2}.
  \end{equation}
  
  From there, do successively $\eta\to \infty$ and $\epsilon\to 0$ to derive the result. 
  If $\rho_0$ and $\rho_1$ do not have positive densities, apply (\ref{eq:hwi}) for the sequences
  \begin{equation}
    \rho^\delta_i(dx) = \int \f{e^{-(x-y)^2/(2\delta)}}{\sqrt{2\pi\delta}}\rho_i(dy),\ \delta > 0,\ i = 0, 1,
  \end{equation}
  and let $\delta\to 0$. Notice that $\rho^\delta_i$ has a positive density and belong to  $L^{\infty}(\R)$ with $|\rho^{\delta}_i|_{L^{\infty}(\R)}\le \inv{\sqrt{2\pi\delta}}$, and that the map $\rho_i\mapsto \rho_i^{\delta}$ leaves the center of mass or the symmetry of the measure $\rho_i$ invariant. By standard arguments (in particular using the isometry property of the Hilbert transform and the fact that $\rho_i\in \mathcal{M}(\R)\cap L^{\infty}(\R)\subset L^2(\R)$) we show that $\Sigma(\rho_i) < \infty $, $\lim_{\delta\to 0}\Sigma(\rho_i^{\delta}) = \Sigma(\rho_i)$, $\lim_{\delta\to0}W_2(\rho_0^{\delta}, \rho_1^{\delta}) = W_2(\rho_0, \rho_1)$ and $\lim_{\delta\to 0}D(\rho^{\delta}_0) = D(\rho_{0})$.
\end{proof}
\begin{rmk}\label{rmk:ledoux}
  Our theorem is similar to Theorem 1.4~\cite{li2013generalized}, but we do not assume any assumption regarding the compactness of the support. Therefore, our proof can be extended to gases with a diffusive internal energy (like in \cite{tugaut2013convergence} for example).
 In Theorem 5~\cite{Ledoux_2009}, the authors established a HWI inequality for log gases. Their result can recovered by considering a simpler version of estimate (\ref{w-drop}). Indeed, by applying $-\log{(1 + x)} + x\ge 0$ on the whole space, we find
  \begin{equation}
    \mathcal{W}(\rho_s) -\mathcal{W}(\rho_0) -(s-\epsilon)\omega_\epsilon\ge 0.
  \end{equation}
  Using this crude estimate, (\ref{eq:ramon}) becomes
  \begin{equation}
    \Sigma(\rho_{1-\epsilon}) - \Sigma(\rho_\epsilon) \ge (1-2\epsilon)\left[G'_V(\epsilon)+\omega_\epsilon\right] + \inv{2}\inf_{x\in \R}V''(x)(1-\epsilon)^2W_2(\rho_0, \rho_1)^2.
  \end{equation}
  Therefore under the assumption that $\inf_{x\in \R}V''(x)\ge 2\lambda > 0$, letting $\epsilon\to 0$, we find 
  \begin{equation}
    \label{HWI-ledoux}
    \Sigma(\rho_0) - \Sigma(\rho_1)\le \sqrt{D(\rho_0)}W_2(\rho_0, \rho_1) - \f{\lambda}{2} W_2(\rho_0, \rho_1)^2.
  \end{equation}
\end{rmk}

\subsection{Moment estimates}
We establish upper-bounds for the moments of solutions of (\ref{eq:fokker_planck}). Those estimates are useful to prove tightness of a solution and to bound uniformly in $t$ the tail probabilities $\Prob_{\mu_t}(|x|\ge r)$. The idea is essentially to use dynamics (\ref{eq:fokker_planck_bis}) to get a differential inequation satisfied by the moments.

Let's start with the following lemma, justifying extension of test functions to power functions. We delay the proof to the appendix.
\begin{lem}
\label{lem:power_testf}
Let $(\mu_t)_{t\ge 0}$  be a solution of (\ref{eq:fokker_planck}) such that for all $t\ge 0$, $\mu_t$ has a finite moment of order $p\ge 1$. Assume that
\begin{enumerate}
    \item $s\mapsto\int_{\R} (1+|V'(x)|)|x|^p\mu_s(dx)\in L_{loc}^1([0, \infty[),$
    \item $x\mapsto |V'(x)||x|^p\in L^1(\R,\mu_s)$ for all $s\ge 0$.
\end{enumerate}
Then the power function $x\in\R\rightarrow |x|^p$ is a valid test function in (\ref{eq:fokker_planck_bis}).
\end{lem}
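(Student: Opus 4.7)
The strategy is to approximate $f(x) = |x|^p$ by a sequence $(f_n) \subset \mathcal{C}_0^\infty(\R)$ of admissible test functions, apply (\ref{eq:fokker_planck_bis}) to each $f_n$, and pass to the limit by dominated convergence.

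First, I would construct the approximants by combining a smooth truncation with a mollification. Fix $\chi \in \mathcal{C}_0^\infty(\R,[0,1])$ with $\chi\equiv 1$ on $[-1,1]$ and $\supp \chi \subset [-2,2]$, and let $\rho_\veps$ be a standard mollifier. Setting
\[ f_n(x) = \chi(x/n)\,\bigl(|\cdot|^p * \rho_{1/n}\bigr)(x) \]
produces $f_n \in \mathcal{C}_0^\infty(\R)$ with $f_n(x)\to |x|^p$ pointwise, $f_n'(x)\to p\sgn(x)|x|^{p-1}$ pointwise on $\R\setminus\{0\}$, together with the uniform bounds $|f_n(x)|\le C(1+|x|^p)$ and $|f_n'(x)|\le C(1+|x|^{p-1})$ with $C$ independent of $n$.

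Next, I would write (\ref{eq:fokker_planck_bis}) in integrated form on $[0,t]$ applied to $f_n$:
\[ \int f_n\,d\mu_t - \int f_n\,d\mu_0 = \int_0^t\Bigl[\inv{2}\iint \frac{f_n'(x)-f_n'(y)}{x-y}\,\mu_s(dx)\mu_s(dy) - \int V'(x)\,f_n'(x)\,\mu_s(dx)\Bigr]ds, \]
and take $n\to\infty$ in each term. The left-hand side converges by dominated convergence from the finite $p$-th moments of $\mu_0$ and $\mu_t$. The potential integrand converges pointwise to $p\sgn(x)V'(x)|x|^{p-1}$ and is dominated by a constant multiple of $|V'(x)|(1+|x|^p)$, which is integrable on $[0,t]\times\R$ against $ds\,\mu_s(dx)$ by assumptions (1)--(2), so Fubini and DCT apply. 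For the singular double integral, the identity $\frac{f_n'(x)-f_n'(y)}{x-y} = \int_0^1 f_n''\bigl((1-r)y + rx\bigr)\,dr$ identifies the pointwise limit on $\{x\ne y\}$ with $\frac{f'(x)-f'(y)}{x-y}$, together with a dominating bound of the form $C(1+|x|^{p-2}+|y|^{p-2})$.

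The main obstacle is producing an \emph{integrable} dominating function for the singular integral. For $p\ge 2$ the bound $C(1+|x|^{p-2}+|y|^{p-2})$ is locally bounded and globally integrable against $\mu_s\otimes \mu_s$ through the $p$-moment hypothesis, and dominated convergence applies directly. For $1\le p<2$ the candidate bound develops a non-integrable cusp at the origin, and I would split the integration domain, controlling the near-origin contribution via an $\veps$-argument using non-atomicity of $\mu_s$ (automatic in the absolutely continuous log-Coulomb setting where the lemma is applied). Passing to the limit then yields the integrated weak formulation for $f(x)=|x|^p$; assumption~(1) ensures that the right-hand side is locally absolutely continuous in $t$, and differentiating recovers (\ref{eq:fokker_planck_bis}) with $|x|^p$ as test function.
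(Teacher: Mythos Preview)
Your approach is correct and matches the paper's strategy: truncate $|x|^p$ by a smooth compactly supported cutoff, plug into the integrated form of (\ref{eq:fokker_planck_bis}), and pass to the limit in each of the three resulting terms by dominated convergence. The only differences are cosmetic---the paper takes $f_K=\eta_K|x|^p$ without mollification and controls the singular difference quotient through a product-rule telescoping (bounding it by $|f|$, $|f'|$, $\tfrac{|f(x)-f(y)|}{|x-y|}$ and $\tfrac{|f'(x)-f'(y)|}{|x-y|}$) rather than through your mean-value representation $\int_0^1 f_n''$, arriving at the same $|x|^{p-2}+|y|^{p-2}$-type domination.
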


Using this lemma, we prove 
\begin{prop}
\label{prop:a_priori_est}
  Let $(\mu_t)_{t\ge 0}$ be a solution of (\ref{eq:fokker_planck}) with finite moments up to order $p+2$ for some $p\ge 2$. Then, there exists $M_p>0$ such that 
  \[\sup_{t\ge 0}\int_{\R} |x|^{p}\mu_t(dx)\le M_{p}.\]
\end{prop}

\begin{proof}
  Denote $m_p(t) = \int_{\R} |x|^{p}\mu_t(dx)$ for $t, p \ge 0$.
  According to Lemma \ref{lem:power_testf}, we can apply (\ref{eq:fokker_planck_bis}) with the test function $f:x\mapsto |x|^{p}$ so as to obtain
  \begin{equation}
  \label{eq:ineqmp}
                \dot{m}_{p}(t) \le \inv{2}\int_{\R\times\R}\frac{|f'(x) - f'(y)|}{|x - y|}\mu_t(dx)\mu_t(dy)-pam_{p}(t)+p|b|pm_{p-2}(t).
  \end{equation}

    Without loss of generality, we may assume that $|x| < |y|$. We see easily that
    \[
        \frac{|f'(x) - f'(y)|}{|x - y|} \le p\frac{|x|x|^{p-2} - y|y|^{p-2}|}{|x-y|} \le p(|x|^{p-2}+|y|^{p-2}) + p|x|\frac{||x|^{p-2}-|y|^{p-2}||}{||x|-|y||}.
      \]
      We check easily that if $p \ge 3$, then the previous inequality implies
      \[ \frac{|f'(x) - f'(y)|}{|x - y|} \le p(|x|^{p-2}+|y|^{p-2}) + p(p-2)(|x|^{p-2} + |y|^{p-2})  \]
      and if $2 \le p < 3$
      \[ \frac{|f'(x) - f'(y)|}{|x - y|} \le p(|x|^{p-2}+|y|^{p-2}) + p(p-2)|x|^{p-2}.\]
      In both cases for all $x, y\in\R$
      \begin{equation}
      \label{eq:kestim}
          \frac{|f'(x) - f'(y)|}{|x - y|}\le p(|x|^{p-2}+|y|^{p-2}) + p(p-2)(|x|^{p-2} + |y|^{p-2}).
      \end{equation}
      
Using the fact that $m_l(t) \le m_{p}^{l/p}(t)$ for $l \le p$, and inserting (\ref{eq:kestim}) in (\ref{eq:ineqmp}), we deduce the ordinary differential inequation
\begin{equation}
    \dot{m}_{p}(t)\le -pam_{p}(t) + p\left(|b|+p-1\right)m_{p}^{(p-2)/(p)}(t),\ t\ge 0.
\end{equation}
This inequality can be written as
\begin{equation}
    \label{eq:inequation}
    \dot{m}_{p}(t)\le Q(m_{p}(t)),\ t\ge 0
\end{equation}
where $Q(x) = -pax + p(\left |b| + p-1\right)x^{(p-2)/p}$ satisfies $Q(x)\underset{+\infty}{\sim}-pax$.

We will show that this inequality implies the uniform boundedness of moments. Consider $M$ defined by:
\[ M = \sup\{x\ge 0: Q(x)\ge 0 \} = \left(\f{|b| + p-1}{a}\right)^{p/2}.\]
Notice that $m_{p}$ is non-increasing for $m_{p}> M$. Set $M_{p}\equiv \max(M, m_{p}(0))$ and $T = \{t\ge 0: m_{p}(t)\le M\}$. Without loss of generality we may assume that $0\in T$, otherwise $m_{p}$ is decreasing until $t\in T$. The set $\R_+\setminus T$ is open (in $\R_+$) so can be written as
\[\R_+\setminus T = \bigcup_i\ ]s_i, t_i[.\]

On the one hand
\[ \dot{m}_{p}(t)< 0 \textrm{ for } s_i<t<t_i\]
and on the other hand
\[ m_{p}(s_i) = m_{p}(t_i) = M.\]
Those two contradictory statements imply that $T = \R_+$ and for all $t\ge 0$:
\[ m_{p}(t) \le M_{p}.\]
This achieves the proof.
\end{proof}
\begin{rmk}
  In the next section, we consider quartic potential $V$ and we derive in the same way Proposition~\ref{prop:moment-2-quartic} providing uniform bounds for the second order moments of a solution in a more precise way. Still, we would like to stress the usefulness of the more general Proposition~\ref{prop:a_priori_est}, which allows to show the stability of any solution with finite moments of order $p + 2 \ge 2$ towards a stationary measure with respect to the Wasserstein distance of order $p$. Additionally, we will use this proposition for the proof of Theorem~\ref{thm:log-alg}.
\end{rmk}




  

\subsection{Stability for convex potentials}
From the proofs of Theorems~\ref{thm:hwi2} and~\ref{thm:ramon}, we derive Theorem~\ref{thm:log-alg}.
\begin{proof}[Proof of Theorem~\ref{thm:log-alg}]
  Fix $\epsilon\in (0, 1/2)$ and $r>0$. Let $\rho_0, \rho_1\in\mathcal{M}_2$ and $T$ be the optimal transport map from $\rho_0$ to $\rho_1$. Set $\theta = T - Id$. Denote $(\rho_s)_{s\in [0, 1]}$ the geodesic between $\rho_0$ and $\rho_1$.
  Following to the proof of Theorem~\ref{thm:ramon}, and using that the fact that$D^2V\ge 0$, there exists $u\in (\epsilon, 1-\epsilon)$ such that
  \begin{equation}
    \label{hwi-ramon-alg}
    \begin{split}
      \Sigma(\rho_{1-\epsilon}) -\Sigma(\rho_{\epsilon})\ge (1-2\epsilon)[G_V'(\epsilon) + \omega_\epsilon]
      &+ 
   \inv{2}\iint\f{(\theta(x) - \theta(y))^2}{(x - y + u(\theta(x) - \theta(y)))^2}\rho_0(dx)\rho_0(dy)\\
 &- \f{1}{\epsilon^2}\left[\int_{|x|\ge \eta}\rho_0(dx)\right]^{2},
    \end{split}
  \end{equation}
  where $\omega_\epsilon$ is defined as in (\ref{omega-eps}).

  From there, follow the proof of Theorem~\ref{thm:hwi2} by taking $c = 1$ and $\eta = 2$. Estimate (\ref{eq:hwi2}) becomes
  \begin{equation}
    \inv{2}\iint\f{(\theta(x) - \theta(y))^2}{(x - y + u(\theta(x) - \theta(y)))^2}\rho_0(dx)\rho_0(dy)
    \ge C W_2(\rho_0, \rho_1)^{4}.
  \end{equation}

  Inserting this estimate in (\ref{hwi-ramon-alg}) and then letting $\eta\to\infty$ and $\epsilon\to 0$, we get 
  \begin{equation}
    \label{eq:alg-hwi}
    \Sigma(\rho_1) - \Sigma(\rho_0)\ge -\sqrt{D(\rho_0)}W_2(\rho_0, \rho_1) + CW_2(\rho_0, \rho_1)^4.
  \end{equation}
  The result follows by mimicking the proof of Corollary~\ref{coro:hwi2} and using the tightness of $(\mu_t)_{t\ge 0}$ ensured by Proposition~\ref{prop:a_priori_est}.
\end{proof}

\subsection{Stability for confining quartic potential}
In this subsection, we consider the confining quartic potential $V(x) = \f{x^4}{4} + c\f{x^2}{2}$. We recall the following results:
\begin{itemize}
    \item If $c\ge 0$, then $V$ satisfies Ledoux's assumptions \cite{Ledoux_2009}, which allows to prove exponential convergence towards the equilibrium measure $\mu_V$ \cite{li2013generalized}, which density is given by
    \begin{equation}
        \label{eq:density_mu_v}
        \mu_V(dx) = \frac{1}{\pi}\left(\frac{1}{2}x^2 + b\right)\sqrt{a^2-x^2}1_{[a, -a]}(x),
    \end{equation}
    where
    \[a^2=\frac{\sqrt{4c^2+48}-2c}{3},\ \ b = \frac{c + \sqrt{\frac{c^2}{4} + 3}}{3}. \]
    \item If $-2 < c < 0$, $\mu_V$ is the unique stationary measure, and stability is proved in \cite{donati2018convergence}. The density is again given by (\ref{eq:density_mu_v}).
    \item If $c < - 2$, Donati-Martin et al. have proved in \cite{donati2018convergence} that there could be multiple stationary measures.
\end{itemize}

The main result of this subsection will follow from Theorem \ref{thm:ramon} applied to quartic potential $V$ with $c$ negative and close enough to zero. 

The same considerations as in the proof of Proposition \ref{prop:a_priori_est} lead to the following estimate of second order moment.
\begin{prop}
  \label{prop:moment-2-quartic}
  Let $(\mu_t)_{t\ge 0}$ be a solution of (\ref{eq:fokker_planck}) with quartic $V$ $(C1)$. Assume that $(\mu_t)_{t\ge 0}$ have finite fourth moments for all times $t\ge 0$. Then
  \begin{equation}
    \sup_{t\ge 0} \int_{\R}x^2\mu_t(dx) \le \max\left(\int_{\R}x^2\mu_0(dx), \frac{-c + \sqrt{c^2+4}}{2}\right).
  \end{equation}
\end{prop}

We are now ready to prove the Theorem \ref{thm:confining}.
\begin{proof}[Proof of Theorem \ref{thm:confining}]
Let $\mu_0$ satisfying the moment condition (\ref{moment-cond}) and $(\mu_t)_{t\ge 0}$ be a solution of (\ref{eq:fokker_planck}).
  According to Proposition~\ref{prop:a_priori_est}, the second order moments of $(\mu_t)_{t\ge 0}$ are uniformly bounded in time. $\Sigma$ is lower-bounded and $D$ is lower-semi-continuous, so similarly to the proof of Corollary~\ref{coro:hwi1}, $(\mu_t)_{t\ge 0}$ weakly converges towards a stationary solution $\mu_\infty$. According to Proposition 2.7~\cite{donati2018convergence}, the unique stationary solution for $c\in [-2, 0)$ is the minimizer of the entropy $\mu_V$. Finally, the uniform bounds of the moments give tightness and convergence with respect to the Wasserstein distance.

   Assume that $(\mu_t)_{t\ge 0}$ has a fixed center of mass. According to Theorem~\ref{thm:ramon}, for all measures $\rho_0,\rho_{1}\in\mathcal{M}_2$ with finite entropy
  \begin{equation}
    \label{eq:hwi_log}
    \Sigma(\rho_0) - \Sigma(\rho_1) \le W_2(\rho_0, \rho_1)\sqrt{D(\rho_0)} - \f{\lambda}{2} W_2(\rho_0, \rho_1)^2,
  \end{equation}
  with constants $(\alpha, \beta, \gamma)$ given by
  \begin{equation}
    \begin{cases}
      \alpha = 3r^2 + c,\\
      \beta = - c,\\
      \gamma = \inv{4r^2},
    \end{cases}
  \end{equation}
  and the optimal rate
  \begin{equation}
    \label{const-lambda}
    \lambda = \f{c}{2} + \sup_{r > 0}\left[\f{\min\left(3r^2, \inv{2r^2}\right)}{2} - \f{P_r}{2r^2}\right].
  \end{equation}
  Under the assumption that
  \begin{equation}
    \int_{\R}x^2\mu_0(dx) \le \f{-c+\sqrt{c^2+4}}{2}
  \end{equation}
  we have for $c\in [-2, 0)$
  \begin{equation}
    P_r\le\f{-c+\sqrt{c^2+4}}{2r^2}. 
  \end{equation}
  It follows that the constant $\lambda$ (\ref{const-lambda}) can be lower-bounded by
  \begin{equation}
    \lambda \ge \f{c}{2} + \inv{4}\sup_{r>0}\inv{r^4}\left[\min\left(6r^6, r^2\right) - (- c + \sqrt{c^2+4})\right].
  \end{equation}

  We solve this optimization easily and find
  \begin{equation}
    \lambda \ge \inv{16(-c + \sqrt{c^2 + 4})} + \f{c}{2}.
  \end{equation}
  We check that this last quantity is positive for any $c\in (c^*, 0)$ with
  \begin{equation}
    c^* = -\inv{4\sqrt{17}}.
  \end{equation}
  From there, an obvious variant of the reasoning of Corollary \ref{coro:hwi1} makes it possible to conclude.

  In the case of $\mu_0$ symmetric, the rate $\lambda$ can be slightly improved. Indeed, we can improve estimate (\ref{estimate-rho-s}) in the following way. Write for $\rho_0$ and $\rho_1$ symmetric
  \begin{equation}
    \left((1-s)x + sT(x)\right)^2 = (1-s)^2x^2 + s^2T(x) + 2s(1-s)xT(x).
  \end{equation}
  Now, $T$ being odd and the gradient of a convex function, we have $T(0) = 0$ and
  \begin{equation}
    xT(x)\ge 0.
  \end{equation}
  Therefore
  \begin{equation}
    \left((1-s)x + sT(x)\right)^2 \ge (1-s)^2x^2 + s^2T(x).
  \end{equation}
  We deduce that if $s\le \inv{2}$
  \begin{equation}
    |(1-s)x + sT(x)|\le r\implies |x|\le 2r,
  \end{equation}
  and if $s\ge\inv{2}$
  \begin{equation}
    |(1-s)x + sT(x)|\le r\implies |T(x)|\le 2r.
  \end{equation}
  Therefore
  \begin{equation}
    \int_{|x|\le r}\rho_s(dx)\ge 1_{s\le 1/2}\int_{|x|\le 2r}\rho_0(dx) + 1_{s\ge 1/2}\int_{|x|\le 2r}\rho_1(dx)\ge 1 - P_{2r}.
  \end{equation}
  This bound is better because $P_{2r}\le 2P_r$.

  Consequently, in the case of $\mu_0$ symmetric, the optimal $\lambda$ is given by
  \begin{equation}
    \label{prblm-lambda}
    \lambda = \inv{4}\sup_{r>0}\inv{r^2}\min\left(6r^4, 1-P_{2r}\right) + \f{c}{2}.
  \end{equation}

  Similarly, $\lambda$ can be lower-bounded by
  \begin{equation*}
    \begin{split}
  \lambda &\ge \inv{4}\sup_{r>0}\inv{r^4}\min{\left(6r^6, r^2-\f{(-c+\sqrt{c^2+4})}{8}\right)} + \f{c}{2}\\
          &=\inv{2(-c+\sqrt{c^2+4})} + \f{c}{2}.
    \end{split}
  \end{equation*}
  This last quantity is positive for any $c\in (c^*, 0)$ with
  \begin{equation}
    c^* = -\inv{\sqrt{6}}.
  \end{equation}
  The result follows.
\end{proof}

Finally, we end this section by giving the proof of Theorem \ref{thm:bonus}, which overrides the assumptions of a fixed center of mass or symmetry. The optimal $c^*$ obtained is numerically much smaller than the constant obtained in the previous theorem. Therefore, the value of our result lies in its proof. We first need two technical lemmas. The first one is proved in the appendix, the second proof is omitted and follows exactly the proof of Proposition \ref{prop:a_priori_est}.

\begin{lem}
  \label{lem:tail-rho-s}
  Let $(\rho_s)_{0\le s\le 1}=(((1-s)Id + sT)\#\rho_0)_{0\le s\le 1}$ be a geodesic between $\rho_0\in\mathcal{M}_4$ and a symmetric measure $\rho_1\in\mathcal{M}_4$. Then for all $s\in [0, 1]$ and $r>0$
  \begin{equation}
    \int\limits_{|x|\le r}x^2\rho_s(dx) \ge \inv{2}\min\left(\int x^2\rho_0(dx), \int x^2\rho_1(dx)\right) - \f{8}{r^2}\max\left(\int x^4\rho_0(dx), \int x^4\rho_1(dx)\right).
  \end{equation}
\end{lem}

\begin{lem}
  Let $(\mu_t)_{t\ge 0}$ be a solution of (\ref{eq:fokker_planck}) with quartic $V$. Assume that $(\mu_t)_{t\ge 0}$ have finite sixth moments for all times $t\ge 0$. Then
  \begin{equation}
    \inf_{t> 0} \int x^2\mu_t(dx) \ge \min\left(\int x^2\mu_0(dx), \left(\f{2}{-c+\sqrt{c^2+16}}\right)^4\right)
  \end{equation}
  and
  \begin{equation}
    \sup_{t\ge 0} \int x^4\mu_t(dx) \le \max\left(\int x^4\mu_0(dx), \left(\f{-c+\sqrt{c^2+12}}{2}\right)^2\right).
  \end{equation}
\end{lem}

\begin{proof}[Proof of Theorem \ref{thm:bonus}]
  As before, tightness is clear. It is enough to show a HWI inequality with rate $\lambda>0$.
  We use the notations of the proof of Theorem \ref{thm:ramon}. In the following $\rho_0$ and $\rho_1$ denote respectively $\mu_t$, $t\ge 0$ and $\mu_V$.
  Start with (\ref{eq:ramon-0}) 
  \begin{equation}
    \begin{split}
    (G_V+G_W)''(u) \ge \inv{2}\int V''(x + u\theta(x))\theta(x)^2\rho_0(dx)
+ &\inv{2}\iint\f{(\theta(x) - \theta(y))^2}{(x - y + u(\theta(x) - \theta(y)))^2}\rho_0(dx)\rho_0(dy)\\ -  &\f{1}{\epsilon^2}\left[\int_{|x|\ge \eta}\rho_0(dx)\right]^{2}.
    \end{split}
  \end{equation}

  Therefore, denoting $\psi(x) = 3x^2$, plugging $V''(x + u\theta(x)) = \psi(x + u\theta(x)) + c$ and splitting the integrals according to $|x+u\theta(x)|\le r$, we have
  \begin{equation}
    \begin{split}
    (G_V+G_W)''(u) \ge &\inv{2}\int_{|x+u\theta(x)|\le r} \psi(x+u\theta(x))\theta(x)^2\rho_0(dx) + \f{3r^2}{2}\int_{|x+u\theta(x)|> r} \theta(x)^2\rho_0(dx)\\
      + &\inv{8r^2}\iint\limits_{\substack{|x+u\theta(x)|\le r\\|y+u\theta(y)|\le r}}(\theta(x) - \theta(y))^2\rho_0(dx)\rho_0(dy)\\
      +&\f{c}{2}W_2(\rho_0, \rho_1)^2 -  \f{1}{\epsilon^2}\int_{|x|\ge \eta}\rho_0(dx).
    \end{split}
  \end{equation}

  We now adapt an idea used in section 4.5 \cite{carrillo2003kinetic} to our specific integrals. Write
  \begin{equation}
    \begin{split}
      \iint\limits_{\substack{|x+u\theta(x)|\le r|y+u\theta(y)|\le r}}(\theta(x) - \theta(y))^2\rho_0(dx)\rho_0(dy) &= 2\int\limits_{|x+u\theta(x)|\le r}\rho_0(dx)\int_{|x+u\theta(x)|\le r}\theta(x)^2\rho_0(dx)\\
      &-2\left(\int_{|x+u\theta(x)|\le r}\theta(x)\rho_0(dx)\right)^2.
    \end{split}
  \end{equation}

  From
  \begin{equation}
    \begin{split}
      \left(\int\limits_{|x+u\theta(x)|\le r}\theta(x)\rho_0(dx)\right)^2\le &\int\limits_{|x+u\theta(x)|\le r}\inv{1 + 2r^2\psi(x + u\theta(x))}\rho_0(dx)\\
      \times&\int\limits_{|x+u\theta(x)|\le r}(1 + 2r^2\psi(x + u\theta(x)))\theta(x)^2\rho_0(dx)
    \end{split}
  \end{equation}
  we derive
  \begin{equation}
    \begin{split}
      (G_V+G_W)''(u) &\ge
                       \inv{4r^2}\left(\int\limits_{|x+u\theta(x)|\le r}\left[1 -\inv{1 + 2r^2\psi(x + u\theta(x))}\right]\rho_0(dx)\right)\\
                       \times&\int\limits_{|x+u\theta(x)|\le r}(1 + 2r^2\psi(x+u\theta(x)))\theta(x)^2\rho_0(dx)\\
      &+ \f{3r^2}{2}\int_{|x+u\theta(x)|> r} \theta(x)^2\rho_0(dx) +\f{c}{2}W_2(\rho_0, \rho_1)^2 -  \f{1}{\epsilon^2}\int_{|x|\ge \eta}\rho_0(dx).
    \end{split}
  \end{equation}

  The goal is to lower-bound the term
  \begin{equation}
    \int\limits_{|x+u\theta(x)|\le r}\left[1 -\inv{1 + 2r^2\psi(x + u\theta(x))}\right]\rho_0(dx) = \int_{|x|\le r}\f{2r^2x^2}{1+2r^2x^2}\rho_u(dx).
  \end{equation}

  In \cite{carrillo2003kinetic}, a similar quantity appears and the authors prove that it is bounded away from zero by using the internal energy. In absence of any internal energy, we have to exploit the logarithmic repulsive interaction to show that $\rho_s$ - the interpolation between $\mu_V$ and $\mu_t$ - cannot be concentrated around the origin. This intuition is implemented by looking at the second order moments.
  
  Set $m = \left(\f{2}{-c+\sqrt{c^2+16}}\right)^4$ and $M = \left(\f{-c+\sqrt{c^2+12}}{2}\right)^2$.
  By Lemma \ref{lem:tail-rho-s} ($\rho_1= \mu_V$ is indeed symmetric), we have
  \begin{equation}
\int_{|x|\le r}\f{2r^2x^2}{1+2r^2x^2}\rho_u(dx)\ge \f{2r^2}{1+2r^4}\left[\inv{2}m - \f{8}{r^2}M\right].
  \end{equation}

  Use this estimate and $1 + 2r^2\psi(x + u\theta(x))\ge 1$ to deduce
  \begin{equation}
      (G_V+G_W)''(u) \ge \lambda_r W_2(\rho_0, \rho_1)^2 -  \f{1}{\epsilon^2}\left[\int_{|x|\ge \eta}\rho_0(dx)\right]^{2}
  \end{equation}
  where $\lambda_r$ is given by
  \begin{equation}
    \lambda_r = \f{c}{2} + \inv{2}\min\left(3r^2, \inv{1+2r^4}\left[\inv{2}m - \f{8}{r^2}M\right]
 \right).
  \end{equation}
  
  From this point, follow the end of the proof of Theorem \ref{thm:ramon} to conclude
  \begin{equation}
    \Sigma(\rho_0) - \Sigma(\rho_1) \le W_2(\rho_0, \rho_1)\sqrt{D(\rho_0)} - \f{\lambda_r}{2} W_2(\rho_0, \rho_1)^2.
  \end{equation}
   We see clearly that $\sup_{r>0}\lambda_{r}$ can be made positive for $c$ negative and close enough to zero. The optimal rate can be lower-bounded by
  \begin{equation}
    \lambda = \sup_{r> 0}\lambda_r \ge \f{c}{2} + \inv{4}\sup_{r^2\ge 16M/m}\left[\inv{r^4}\left(\f{m}{2} - \f{8M}{r^2}\right)\right] = \f{c}{2} + \f{m^3}{13924M^2}.
  \end{equation}
  Numerically, we find that $\lambda >0$ for $c > -3.00\times 10^{-9}$.

\end{proof}


\subsection{Stability for non-confining quartic potentials}

The difficulty of the non-confining quartic potential
\begin{equation}
    \label{eq:non_conf_V}
    V(x) = g\frac{x^4}{4} + \frac{x^2}{2},\ g<0,
\end{equation}
is twofold: how do we define a solution of (\ref{eq:fokker_planck}) and what are the equilibrium and stationary measures? Indeed, solutions of (\ref{eq:fokker_planck}) may explode and Assumption \ref{assump:growth} is not satisfied by this kind of potential, so we do not necessarily have the existence of an equilibrium measure. In \cite{allez2015random}, authors give insights on how to restart solutions after explosion time. This approach is different from the idea of Biane and Speicher in section 7.1 \cite{biane2001free}, where the authors explain how to define bounded solutions of (\ref{eq:fokker_planck}), which stay around the origin, and exhibit a good candidate for the equilibrium measure. In the following, we will adopt this last point of view.

Let $(\mathcal{A}, \tau,(\mathcal{A}_t)_{t\ge 0},(S_t)_{t\ge 0})$ be a filtered non-commutative probability space with a free Brownian motion $(S_t)_{t\ge 0}$. Denote $\mathcal{A}^{op}$ the algebra $\mathcal{A}$ with operation $a\cdot_{\mathcal{A}^{op}}b = b\cdot_{\mathcal{A}} a$ for $a, b\in \mathcal{A}$. See \cite{biane1998free}, \cite{biane1997free} and \cite{biane2001free} for more details on free probability and free stochastic processes. For all $t>0$, denote $\rho_t$ the density of semi-circular distribution of mean zero and variance $t$. In those conditions, $S_t$ has a distribution given by $\rho_t$.

\begin{equation}
  \rho_t(dx) = \f{2}{\pi t}\sqrt{t - x^2}1_{[-\sqrt{t}, \sqrt{t}]}(x)dx,\ \forall t> 0.
\end{equation}

In the framework of free probabilities, a solution $(\mu_t)_{t\ge 0}$ of (\ref{eq:fokker_planck}) can be seen as the distribution of the free stochastic process $(X_t)_{t\ge 0}$, solution of the free stochastic differential equation
\begin{equation}
    \label{eq:free_fokker_planck}
    \begin{cases}
    X_t = X_0 + S_t - \frac{1}{2}\int_0^tV'(X_t)dt\\
      Law(X_0) = \mu_0
    \end{cases}
\end{equation} 
We shall denote $\mu_t = Law(X_t)$. We now recall the free It\^o formula and the free Burkholder-Gundy inequality. For any polynomial $\phi = \sum_{n\ge 0}\phi_n X^n$, denote for $X\in \mathcal{A}$  the element $\partial\phi(X)$ of $\mathcal{A}\otimes\mathcal{A}^{op}$ defined by
\begin{equation}
  \partial\phi(X) = \sum_{n\ge 0} \phi_n\sum_{k=0}^{n-1}X^k\otimes X^{n-k-1}.
\end{equation}
Introduce the operator $\Delta_t$ defined by
\begin{equation}
  \Delta_t \phi(x) = 2\frac{d}{dx}\left(\int_\R\f{\phi(x) - \phi(y)}{x-y}\rho_t(dy)\right).
\end{equation}

The free It\^o's formula is then written
\begin{equation}
  \phi(X_t) = \phi(X_0) + \int_0^t\partial \phi(X_s)\sharp dX_s + \inv{2}\int_0^t\Delta_s\phi(X_s)ds,\ \forall t\ge 0,
\end{equation}
where $\int_0^t\partial\phi(X_s)\sharp dX_s$ denotes the free stochastic integral of the bi-process $(\partial\phi(X_{s}))_{s\ge 0}$ with respect to the free It\^o process $(X_t)_{t\ge 0}$.

The free Burkholder-Gundy inequality states that
\begin{equation}
  \left\|\int_0^tY_s\sharp dS_s\right\|\le 2\sqrt{2}\sqrt{\int_0^t\|Y_s\|^2ds},\ \forall t\ge 0,
\end{equation}
for any free It\^o process  $(Y_t)_{t\ge 0}$.

We are now ready to define solutions to (\ref{eq:free_fokker_planck}), detailing an idea of Biane and Speicher succinctly presented in \cite{biane2001free}.
\begin{prop}
  \label{prop:biane}
  Let $g\in \left(-\inv{81+ 36\sqrt{5}} , 0\right)$ and $\mu_0$ be an initial with support included in $(-m(g), m(g))$, with $m(g)$ given by
  \begin{equation}
    m(g) = \sqrt{- \inv{3g} - \f{4}{\sqrt{-g}} - 3}.
  \end{equation}
  Then the process $(X_t)_{t\ge 0}$ defined by (\ref{eq:free_fokker_planck}) exists for all times and the support of $\mu_{t}$ remains in a set of strict convexity of $V$. More precisely, if for some $m\in [0, m(g))$
  \begin{equation}
    supp(\mu_0)\subset [-m, m]
  \end{equation}
  then for all $t\ge 0$
  \begin{equation}
    \label{biane-support}
    supp(\mu_t)\subset \left[- \sqrt{m^2 + \f{4}{\sqrt{-g}} + 3},\sqrt{m^2 + \f{4}{\sqrt{-g}} + 3}\right],
  \end{equation}
  and
  \begin{equation}
    \label{biane-convexity-set}
     \inf_{x\in supp(\mu_t)}V''(x) = 1 + 3g\left(m^2 + \f{4}{\sqrt{-g}} + 3\right) > 0.
  \end{equation}
\end{prop}
\begin{proof}
  Let $\epsilon\in (0, 1)$.
  Set $h = \sqrt{\inv{-3g}}$ and $M = \sqrt{1-\epsilon}h$. With those notations
  \begin{equation}
  \inf_{|x|\le M} V''(x) = 1 + 3gM^2 = \epsilon >0.
  \end{equation}
  The idea of the proof is to show that the stopping time
  \[ T = \inf\{ t \ge 0: \|X_t\| > M \} \]
  is almost surely infinite.

  Fix $\delta > 0$. We apply free It\^o's formula to $e^{\delta t}G(X_t)$ with $G(x) = x^2$:
  \begin{equation}
    \begin{split}
      e^{\delta t}X_t^2 = X_0^2 + \int_0^te^{\delta s}\delta X_s^2ds + \int_0^te^{\delta s}1_{\mathcal{A}}ds &+ \int_0^te^{\delta s}\left(X_s\otimes 1_{\mathcal{A}} + 1_{\mathcal{A}}\otimes X_s\right)\#dS_s\\
      &- \inv{2}\int_0^te^{\delta s}X_sV'(X_s)ds,
    \end{split}
  \end{equation}
  where we used the fact that $\Delta_sG(x) = 2$, $x\in\R$. Noticing that for $s < T$
  \begin{equation}
    \delta X_s^2 - \inv{2}X_sV'(X_s) = \f{X_s^2}{2}[2\delta - gX_s^2 - 1]\le \f{X_s^2}{2}[2\delta - gM^2 - 1] = \f{X_s^2}{2}\left(2\delta - \f{2+\epsilon}{3}\right)
  \end{equation}
  as self-adjoint operators, because $\|X_s\| \le M$ (notice that $-g> 0$).
  Therefore, with the choice $\delta = \f{2+\epsilon}{6}$, we have
  \begin{equation}
    \delta X_s^2 - \inv{2}X_sV'(X_s) \le 0.
  \end{equation}

 Observing that $\|X_s\otimes 1_{\mathcal{A}} + 1_{\mathcal{A}}\otimes X_s\|\le 2\|X_s\|\le 2M$ for $s<T$, we deduce by using free Burkholder-Gundy inequality that
  \begin{equation}
  e^{\delta t}\|X_t^2\| \le \|X_0^2\| + 4\sqrt{2}M\sqrt{\f{e^{2\delta t} - 1}{2\delta}}  + \f{e^{\delta t} - 1}{\delta}.
  \end{equation}

  Consequently, for all $t < T$
  \begin{equation}
    \|X_t\|^2\le m^2 + 4M\sqrt{\f{6}{2+\epsilon}} + \f{6}{2+\epsilon} = m^2 +4\sqrt{6}\sqrt{\f{1-\epsilon}{2+\epsilon}}h + \f{6}{2+\epsilon}.
  \end{equation}

 If the parameters $m, h$ and $\epsilon$ satisfy
 \begin{equation}
   \label{biane-cond-support}
m^2 +4\sqrt{6}\sqrt{\f{1-\epsilon}{2+\epsilon}}h + \f{6}{2+\epsilon} < M = (1-\epsilon)h^2,
 \end{equation}
 then we will have, by the continuity of the norm of the free stochastic integral $(X_t)_{t\ge 0}$, that $T=\infty$ and the support of the distribution of $X_t$ will be bounded.

 Condition (\ref{biane-cond-support}) is satisfied for $m$ and $-g$ small enough. Indeed, we have that
 \begin{equation}
   (1-\epsilon)h^2 - 4\sqrt{6}\sqrt{\f{1-\epsilon}{2+\epsilon}}h - \f{6}{2+\epsilon} > 0
 \end{equation}
 as soon as
 \begin{equation}
   \label{eq:h_star}
   h > \f{2\sqrt{6} + \sqrt{30}}{\sqrt{(1-\epsilon)(2 + \epsilon)}}.
 \end{equation}

 We are interested in finding the smallest $g^*$ such that we can define solutions if the initial support is included in a small neighborhood of the origin. To achieve this, we minimize the right of side of (\ref{eq:h_star}) by taking $\epsilon = 0$.
 Therefore, if 
 \begin{equation}
   - \inv{81 + 36\sqrt{5}} < g < 0
 \end{equation}
 and the initial support satisfies
 \begin{equation}
   m^2 < h^2 - 4\sqrt{3}h - 3= - \inv{3g} - \f{4}{\sqrt{-g}} - 3.
 \end{equation}
 then $T = \infty$ and for all $t\ge 0$
 \begin{equation}
   \|X_t\|^2\le m^2 + \f{4}{\sqrt{-g}} + 3.
 \end{equation}
 This proves (\ref{biane-support}). Finally, we have for all $t\ge 0$
   \begin{equation}
     \inf_{x\in supp(\mu_t)}V''(x) = 1 + 3g\left(m^2 + \f{4}{\sqrt{-g}} + 3\right) = -3g \left(h^2 - m^2 -  \f{4}{\sqrt{-g}} - 3\right) > 0.
   \end{equation}
   This proves (\ref{biane-convexity-set}).
\end{proof}

An immediate consequence is the following lemma.

\begin{lem}
  Let $g\in \left(- \inv{81 + 36\sqrt{5}} , 0\right)$ and $m < m(g)$. The distributions $(\mu_t)_{t\ge 0}$ associated with the process $(X_t)_{t\ge 0}$ have a lower-bounded entropy
  \label{lem:biane}
  \begin{equation}
    \inf_{t\ge 0} \Sigma(\mu_t) > -\infty.
  \end{equation}
  Moreover, the entropy dissipation is still given by $D$
  \begin{equation}
    \f{d}{dt}\Sigma(\mu_t) = - D(\mu_t).
  \end{equation}
\end{lem}
\begin{proof}
  The non-confining potential $V$ (\ref{eq:non_conf_V}) does not satisfy the growth assumption on the whole real line (\ref{growth-V}) of Theorem 3.1 and Proposition 6.1 \cite{biane2001free}. However, the support of the solution being bounded, it can be still satisfied locally. More precisely, there exists a modified potential $\overline{V}$ satisfying
  \begin{equation}
    \begin{cases}
      \overline{V}\in C^2(\R)\\
      \overline{V}(x) = V(x),\textrm{ for } x\in [-M - 1, M + 1]\\
      \overline{V}(x) \ge x^4,\textrm{ for } x\in [-M - 2, M + 2].
    \end{cases}
  \end{equation}
  $\overline{V}$ will therefore satisfy the growth assumption of Theorem 3.1 \cite{biane2001free}. $(\mu_t)_{t\ge 0}$ satisfies the Fokker-Planck equation
  \begin{equation}
    \partial_t\mu_t = \partial_x\left[\mu_t\left(\inv{2}\overline{V}' - H\mu_t\right)\right].
  \end{equation}
  The entropy and dissipation of $\mu_t$, $t\ge 0$ are given respectively by
  \begin{equation}
    \begin{split}
      \Sigma(\mu_t) &= \inv{2}\int_{\R}V(x)\mu_t(dx) - \inv{2}\int_{\R\times\R}\log{|x-y|}\mu_t(dx)\mu_t(dy)\\
      &=\inv{2}\int_{\R}\overline{V}(x)\mu_t(dx) - \inv{2}\int_{\R\times\R}\log{|x-y|}\mu_t(dx)\mu_t(dy)
    \end{split}
  \end{equation}
  and 
  \begin{equation}
    D(\mu_t) = \int_\R \left|\inv{2}\overline{V}'(x)-H\mu(x)\right|^2\mu_t(dx).
  \end{equation}
  The result follows from Theorem 3.1 and Proposition 6.1 \cite{biane2001free} applied to $\overline{V}$.
\end{proof}




We are now ready to prove Theorem \ref{thm:biane}.

\begin{proof}[Proof of Theorem \ref{thm:biane}]
  The measures $(\mu_t)_{t\ge 0}$ have uniformly bounded moments, thanks to the boundedness of the support. Therefore, we have tightness with respect to the Wasserstein distance. Let $\mu_\infty$ be a limit point. According to Lemma \ref{lem:biane}, this limit point is a stationary solution.

  Set $R = \sqrt{m^2 + \f{4}{\sqrt{-g}} + 3}$.
  Let $\rho_0, \rho_1\in\mathcal{M}_2$ with support included in $[-R, R]$. Let $T$ be the optimal transportation map from $\rho_0$ to $\rho_1$. We see that for all $s\in [0, 1]$
  \begin{equation}
    (1-s)x + sT(x)\in [-R, R],\ \forall x\in supp(\rho_0).
  \end{equation}
  Using the fact $\inf\limits_{|x|\le R}V''(x)\ge 2\lambda > 0$, we deduce
  \begin{equation}
    \inv{2}\int V''((1-s)x + sT(x))(T(x) - x)^2\rho_0(dx)\ge \lambda W_2(\rho_0, \rho_1)^2,\ \forall s\in [0, 1].
  \end{equation}
  Following a similar argument to the Remark \ref{rmk:ledoux}, we derive the HWI inequality
  \begin{equation}
    \label{eq:hwi-biane}
    \Sigma(\rho_0)-\Sigma(\rho_1)\le \sqrt{D(\rho_0)}W_2(\rho_0, \rho_1) - \f{\lambda}{2}W_2(\rho_0, \rho_1)^2.
  \end{equation}
  Similarly to Corollary \ref{coro:hwi1}, we deduce exponential stability. To see that $\mu_\infty$ is a local minimizer, take $\rho_0 = \mu_\infty$ and $\rho_1 = \mu$ with support included in $\left[-R, R\right]$. $\mu_\infty$ being a stationary measure, the HWI inequality gives
  \begin{equation}
    \Sigma(\mu_\infty) - \Sigma(\mu) \le -\f{\lambda}{2}W_2(\mu_\infty, \mu)^2\le 0.
  \end{equation}
\end{proof}
\begin{rmk}
  This result can be slightly improved by allowing $g < g^*$. Indeed, we did not exploit the logarithmic energy as we did in Theorem \ref{thm:confining}. More precisely, we do not need $\inf_{x \in supp(\mu_t)}V''(x)>0$ for all $t\ge 0$ to deduce (\ref{eq:hwi-biane}). Indeed, for solutions with a fixed center of mass and with a symmetric initial data, it is enough to ensure that 
  \begin{equation}
    \inf_{x\in supp(\mu_t)}V''(x) + \inv{4\max_{x\in supp(\mu_t)}|x|^2} >0,\ \forall t\ge 0.
  \end{equation}
\end{rmk}

\appendix
\section{Proof of Lemma \ref{lem:power_testf}}

\begin{proof}[Proof of Lemma \ref{lem:power_testf}]
Let $\phi\in \mathcal{C}^\infty(\R, [0, 1])$ such that
\begin{enumerate}
    \item $\phi(0) = 1$ and $\phi(1) = 0$,
    \item $\phi^{(k)}(0) = \phi^{(k)}(1) = 0$ for $k\ge 1$.
\end{enumerate}
Set $|\phi^{(k)}|_\infty = \sup_{x\in[0,1]}|\phi^{(k)}(x)|$.
Define for $K \ge 1$:
\[ \eta_K(x) = \left\{\begin{array}{cl}
    1 &\textrm{ if $|x| \le K$} , \\
    \phi(|x| - K)&\textrm{ if $K\le  |x| \le K + 1,$}\\
    0&\textrm{if $|x|>K+1$}.
\end{array}\right. \]

We check easily that $\eta_K\in \mathcal{C}^\infty(\R, [0, 1])$ and that for all $k\ge 1$,
\[  |\eta_K^{(k)}|_\infty \le C_k |\phi^{(k)}|_\infty\]
for some $C_k > 0$.

Setting $f:x\mapsto |x|^p$ and taking $f_K:x\mapsto \eta_K(x)f(x)$ in (\ref{eq:fokker_planck_bis}), gives
\begin{equation}
\label{eq:lemma_test_ABC}
\begin{split}
\underbrace{\int_{\R} f_K(x)\mu_t(dx) - \int_{\R} f_K(x)\mu_0(dx)}_{\equiv A(K)} &= \inv{2}\int_0^t\underbrace{\iint_{\R\times\R}\frac{f'_K(x)-f'_K(y)}{x-y}\mu_s(dx)\mu_s(dy)}_{\equiv B(s, K)}ds\\
&- \int_0^t\underbrace{\int_{\R} V'(x)f'_K(x)\mu_s(dx)}_{\equiv C(s, K)}ds.\\
\end{split}
\end{equation}
Using that $|f_K|\le |f|$ and the dominated convergence theorem, we see that the left-hand side $A(K)$ converges towards $\int_{\R} |x|^p\mu_t(dx) - \int_{\R} |x|^p\mu_0(dx)$ when $K\rightarrow +\infty$.

Likewise, $C(s, K) = \int_{\R}V'(x)\cdot[|x|^p \eta'_K(x) + p|x|^{p-2}\eta_K(x)x]\mu_s(dx)$ converges towards
\[ C(s, \infty) = \int_{\R}pV'(x)\cdot x|x|^{p-2}\mu_s(dx).\]
Moreover
\[ C(s, K) \le \max(|\phi'|_\infty, |\phi|_\infty)\int_{\R} |V'(x)|(|x|^p + p|x|^{p-1})\mu_s(dx) \in L^1([0, t]).\]

According to the dominated convergence theorem
\begin{equation}
    \label{eq:lemma_test_C}
     \int_0^t C(s, K)ds \xrightarrow{K\rightarrow +\infty} \int_0^t\int_{\R} V'(x)\cdot px|x|^{p-2}\mu_s(dx).
\end{equation}

We treat $B(s, K)$ in a similar way
\begin{equation}
    \label{eq:B(s, K)}
    \begin{split}
    \frac{|(\eta_Kf)'(x) - (\eta_Kf)'(y))\cdot(x - y)|}{|x-y|^2}\le |\phi''|_\infty|f(y)| &+ |\phi|_\infty\frac{|f'(x)-f'(y)|}{|x-y|}\\ 
    &+ |\phi'|_\infty\bigg[|f'(y)| + \frac{|f(x)-f(y)|}{|x-y|}\bigg].\\
    \end{split}
\end{equation}

We check that for $f(x)= |x|^p$ and $\int|x|^p\mu_t(dx)<+\infty$, the right-hand side is integrable. Therefore, $B(s, K)$ converges towards
\[ B(s, \infty) = \iint_{\R\times\R}\frac{f'(x)-f'(y)}{x-y}\mu_s(dx)\mu_s(dy).\]

Using (\ref{eq:B(s, K)}), we find an integrable function in $L^1([0, t])$ dominating $\iint_{\R}B(s, K)\mu_s(dx)$, and thereupon
\begin{equation}
    \label{eq:lemma_test_B}
    \int_0^t B(s, K)ds\xrightarrow{K\rightarrow +\infty} \int_0^t\iint_{\R\times\R}\frac{ f'(x)-f'(y)}{x-y}\mu_s(dx)\mu_s(dy)ds.
\end{equation}
Finally, we conclude from (\ref{eq:lemma_test_ABC}), (\ref{eq:lemma_test_C}) and (\ref{eq:lemma_test_B}), that equation (\ref{eq:fokker_planck_bis}) is satisfied for $f(x) = |x|^p$.

\end{proof}

\section{Proof of Lemma \ref{lem:tail-rho-s}}
\begin{proof}[Proof of Lemma \ref{lem:tail-rho-s}]
  Write
  \begin{equation}
    \int_{|x|\le r}x^2\rho_s(dx) = \int x^2\rho_s(dx) - \int_{|x|>r}x^2\rho_s(dx).
  \end{equation}
  Treat each term independently. First
  \begin{equation}
 \int x^2\rho_s(dx) = \int ((1-s)x + sT(x))^2\rho_0(dx) \ge \inv{2}\min\left( \int x^2\rho_0(dx), \int x^2\rho_1(dx)\right) + 2s(1-s)\int xT(x)\rho_0(dx).
  \end{equation}
  Introduce $q$ the median of $\rho_0$ satisfying $F_{\rho_0}(q)=\inv{2}$. We see immediately by the assumption on $\rho_1$ that $T(q)= 0$. Therefore, $T$ being the gradient of a convex function
  \begin{equation}
    \int xT(x)\rho_0(dx) = \int (x-q)(T(x)-T(q))\rho_0(dx) + q\int T(x)\rho_0(dx)\ge q\int x\rho_1(dx)=0.
  \end{equation}
  Consequently
  \begin{equation}
    \int x^2\rho_s(dx)\ge \inv{2}\min\left( \int x^2\rho_0(dx), \int x^2\rho_1(dx)\right).
  \end{equation}
  For the second term, write
  \begin{equation}
    \int_{|x|>r}x^2\rho_s(dx)\le \inv{r^2}\int x^4\rho_s(dx)\le \f{8}{r^2}\max\left( \int x^4\rho_0(dx), \int x^4\rho_1(dx)\right).
  \end{equation}
\end{proof}

\newpage
\bibliographystyle{plain}  
\bibliography{bib}

\end{document}